\documentclass{amsart}

\usepackage{amssymb}
\usepackage{graphicx}
\usepackage{array,url,xcolor,ulem}

\newtheorem{theorem}{Theorem}[section]
\newtheorem{lemma}[theorem]{Lemma}

\theoremstyle{definition}

\theoremstyle{remark}

\numberwithin{equation}{section}

\begin{document}

\title[L--space knots without Khovanov thin surgery]{An infinite family of strongly invertible L--space knots without Khovanov thin surgery}

\author{Marc Kegel}
\address{Universidad de Sevilla, Dpto.\ de Álgebra,
Avda.\ Reina Mercedes s/n,
41012 Sevilla, Spain}
\email{kegelmarc87@gmail.com}

\author{Masakazu Teragaito}
\address{Department of Mathematics Education, Hiroshima University,
1-1-1 Kagamiyama, Higashi-hiroshima, Japan 739-8524.}
\email{teragai@hiroshima-u.ac.jp}

\subjclass[2020]{Primary 57K10; Secondary 57K18}

\date{\today}

\begin{abstract}
Currently, there exist only three strongly invertible L--space knots that are known to admit no Khovanov thin surgery.
In this article, we give the first infinite family of such knots.
\end{abstract}

\maketitle

\section{Introduction}

In \cite{Wa}, Watson introduced an invariant $\varkappa(K,\Phi)$ for a knot $K$ with strong inversion $\Phi$,
which is a finite dimensional $\mathbb{Z}$-graded vector space.
In particular, $\varkappa(K,\Phi)\cong0$ if and only if $K$ is the unknot.

Watson \cite{Wa0} proposed a conjecture that
a strongly invertible knot  $K$ with a strong inversion $\Phi$ is an L--space knot if and only if 
the Watson invariant $\varkappa(K,\Phi)$ is supported in a single diagonal grading $\delta=q-2h$.
For one of the implications of this conjecture,
Baker, Kegel, and McCoy \cite{BKM}
gave two counterexamples.
These are the knots $t09847$ and $o9\_30634$ in the SnapPy census; for details on the census, we refer to~\cite{D1,D2,L,ABG,BKM2,BKM3}.
Interestingly, both share another exceptional property. Their formal semigroups \cite{BCG,W,Te} are closed under addition.
This observation leads to one more counterexample, $o10\_143807$ \cite{Te2}.
At present, these three knots are the only strongly invertible L--space knots whose Watson invariant is known to be thick.

In addition, these knots are shown to admit no Khovanov thin surgery.
This means that filling along any non-trivial slope yields a $3$-manifold which can never be the double branched cover of $S^3$ over
a Khovanov thin knot or link.

The purpose of this article is to give the first infinite family of hyperbolic L--space knots having
a (unique) strong inversion whose associated Watson invariant is thick and admitting no Khovanov thin surgery.

For an integer $n\ge 1$,
let $K_n$ be the closure of the $4$-braid
\[
[(2,1,3,2)^{2n+1},-1,2,1,1,2],
\]
where an integer $\pm i$ denotes the standard generator $\sigma_i^{\pm1}$ of the braid group $B_4$ of $4$-strands. For a surgery description of $K_n$, we refer to Figure~\ref{fig:knot}.
This knot $K_n$ is known to be a strongly invertible hyperbolic $L$--space knot whose formal semigroup
is closed under addition \cite{BK}.
In particular, $K_1$ is $o9\_30634$ in the SnapPy census.

\begin{figure}[htbp]
\includegraphics*[width=0.5\textwidth]{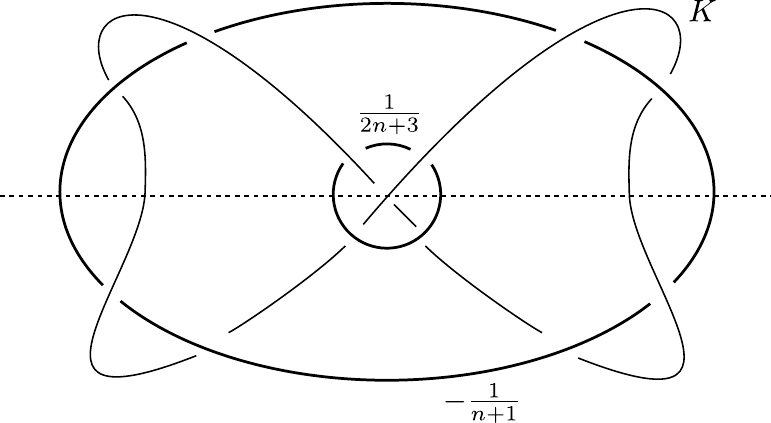}
\caption{A strongly invertible surgery description of $K_n$.
By performing the two surgeries shown here, $K$ is mapped to the knot $K_n$. A surgery coefficient $r$ of $K$ is mapped under this diffeomorphism to the surgery coefficient $8n+4+r$ of $K_n$.
The strong inversion $\Phi$ of $L$ given by the $\pi$-rotation along the dotted line induces a strong inversion of $K_n$.
}\label{fig:knot}
\end{figure}

\begin{theorem}\label{thm:main}
For each $n\geq1$, the knot $K_n$ as defined above is a strongly invertible L-space knot with a unique strong inversion $\Phi$ and its Watson invariant $\varkappa(K_n,\Phi)$ is thick.
More precisely, it is supported in at least two $\delta$-gradings.
\end{theorem}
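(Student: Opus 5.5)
Three things need proof: $K_n$ is a strongly invertible L--space knot, its strong inversion is unique, and $\varkappa(K_n,\Phi)$ is supported in at least two $\delta$-gradings. The first is quoted from \cite{BK}. The strong inversion $\Phi$ is the one induced by the $\pi$-rotation in Figure~\ref{fig:knot}.

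For uniqueness, $K_n$ is hyperbolic, so its symmetry group is finite and equals the isometry group of the complement. I would argue in one of two ways. One option is to compute it directly for the family, since the complements are Dehn fillings of a fixed two-cusped manifold (the complement of $K$ together with the two surgery components in Figure~\ref{fig:knot}). The other option is to invoke the general fact that a hyperbolic L--space knot has no periodic or free symmetries other than those forced by the strong inversion. The known result is that hyperbolic L--space knots have at most one strong inversion up to equivalence; it rests on the symmetry group being cyclic or dihedral of order at most $2$ plus the inversion. I would cite this rather than recompute it.

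The main work is thickness, and I would use Watson's surgery exact triangle for $\varkappa$. Watson's invariant is defined through the quotient tangle $T$ of $(S^3\setminus\nu K_n)/\Phi$: for large slopes, $\varkappa$ is obtained from the Khovanov homology of the branch sets $T(r)$, the links whose double branched covers are $S^3_r(K_n)$. The key fact is that if $\varkappa$ were supported in a single $\delta$-grading, then for all sufficiently large integers $r$ the Khovanov homology of the branch set $T(r)$ would also be thin. More precisely, $\widetilde{Kh}(T(r))$ stabilizes to $\varkappa$ plus a thin tower.

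So it suffices to find, for each $n$, a single large integral slope $r$ whose branch set has $\widetilde{Kh}$ occupying two $\delta$-gradings in a way that does not come from the stable tower. The Figure gives an explicit tangle description uniform in $n$. Quotienting $K$ and the two surgery curves by $\Phi$ yields a tangle in which the $n$-dependence is a rational twist region. The branch set $T(8n+4+r)$ is then a family of links differing by twisting. I would compute $\widetilde{Kh}$ for $n=1$ via the known $o9\_30634$ result of \cite{BKM}. Then I would induct on $n$ using the skein long exact sequence along a crossing in the twist region, tracking the $\delta$-shift. This shows that a class in an off-diagonal $\delta$-grading, present for $n=1$, persists: the connecting maps vanish on it for grading reasons.

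I expect this inductive step to be the main obstacle. One must guarantee that the twist-region resolutions give an exact triangle in which the extra $\delta$-grading cannot cancel. That requires controlling the $\delta$-gradings of the resolved links, which should be thin (two-bridge or alternating) pieces. If this fails, the fallback is to compute $\varkappa(K_n,\Phi)$ directly as the limit of $\widetilde{Kh}(T(m))$ with a closed formula in $n$, using the twist-region structure.
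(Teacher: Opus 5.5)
Your proposal has a genuine gap in the thickness part. It never gives a mechanism that works for all $n$, and it never gets from finitely many closures to $\varkappa$.

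\textbf{Passing to the limit.} Your reduction rests on the claim that a thin $\varkappa$ would force $\mathrm{Kh}(T(r))$ to be thin for large $r$. This is unjustified, because the thin ``tower'' fed in by the unknot term $T(\infty)$ of the skein sequence need not lie on the diagonal of $\varkappa$. In this family, normalizing at $N=0$ as the paper does, the tower lies on $\delta=-8n-9$ as $N\to+\infty$ and on $\delta=-8n-7$ as $N\to-\infty$. These are exactly the two diagonals carrying the classes the paper finds in $\varkappa$. So thickness of one closure, however far out, says nothing about $\varkappa$ until specific classes are followed through the whole inverse system. ``Not coming from the tower'' is the whole difficulty, not a side condition.

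The paper does this in two steps. It shows $\mathrm{Kh}^{-4}_{-9}(T_n(8n+6))\cong\mathrm{Kh}^{2n+4}_{4n+5}(T_n(8n+6))\cong\mathbb{Z}_2$. Then, in Lemma~\ref{lem:two-lines}, it runs the skein sequence at a crossing of the merged twist region for every integer $N$. The unknot term does meet each tracked line once: at $N=8n+4$ for $h=-4$, and at $N=10n+12$ for $h=2n+4$. So it cannot be dismissed for grading reasons. At those two steps one must check that the map on the tracked line is injective, respectively surjective, so the class is not lost. The exact ranks there are pinned down using Lemmas~\ref{lem:8n+5} and~\ref{lem:10n+11}. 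Nothing in your plan plays this role.

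\textbf{Induction on $n$.} Starting from $K_1$ cannot supply the base computation, for three reasons. First, passing from $n$ to $n+1$ changes three twist regions at once: the boxes of $-(2n+3)$ and $n+1$ half-twists coming from the surgery coefficients $1/(2n+3)$ and $-1/(n+1)$, and the horizontal box of $-(8n+6)$ half-twists. No single crossing relates $T_n$ to $T_{n+1}$. Second, one of the two relevant classes sits in homological grading $2n+4$, so there is no fixed class that could persist. Third, the input from Baker--Kegel--McCoy for $K_1$ is a statement about $\varkappa$, not a bigraded computation of a specific closure that an induction could start from.

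The missing idea is the paper's. Fix $n$ and pass to the closure $T_n(8n+6)$, where the horizontal twisting cancels; this is the diagram $D^{-(2n+3)}_{n+1}$. Then induct on the twist parameters $i,j$ of $D^i_j$ down to the Hopf link $D^0_0$. The side terms are the $n$-independent thin links $H_\pm$ and $F_\pm$, computed once, which makes the grading bookkeeping uniform in $n$. Your instinct that the resolved pieces are thin is right, but it has to be attached to this specific recursion.

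\textbf{Uniqueness of $\Phi$.} This also has a hole. The general statement that hyperbolic L--space knots have at most one strong inversion is not an available result you can cite; the paper verifies uniqueness by computer instead. Your first option, via Thurston's theorem, only covers $n$ beyond an unspecified threshold. The paper makes it effective with the Futer--Purcell--Schleimer bound, which covers $n\ge 9$, and checks $n\le 8$ directly in SnapPy. Note also that the parent manifold is the complement of the three-component link $L$, with two cusps filled, not a two-cusped manifold.
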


In fact, computer calculations suggest that $\varkappa(K_n,\Phi)$ has width exactly two. From the braid words, it is readily seen that each $K_n$ is strongly quasi-positive, from which we can read off that the Seifert genus distinguishes all knots in the family. Moreover, we can show that the knots $K_n$ have no Khovanov thin surgery.

\begin{theorem}\label{thm:no_thin}
For each $n\geq1$, the knot $K_n$ has no Khovanov thin surgery.
\end{theorem}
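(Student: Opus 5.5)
The plan is to follow Watson's framework and reduce Theorem~\ref{thm:no_thin} to Theorem~\ref{thm:main}, together with the structure of the surgery exact sequence for Khovanov homology of branched double covers. By the Montesinos trick, every surgery $K_n(r)$ is the double branched cover of $S^3$ over a link $L_r$. The strong inversion $\Phi$ is unique, and a branched double cover description of a surgery on a hyperbolic knot comes from a strong inversion (up to the finitely many exceptional/non-hyperbolic fillings, handled separately). So the only candidate thin links are the $L_r$ arising as quotient tangle fillings: the quotient of the exterior of $K_n$ by $\Phi$ is a ball containing a tangle $T$, and $L_r = T(r)$.

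First I would recall Watson's description of $\varkappa(K,\Phi)$. It is built from the Khovanov homology of the tangle fillings $T(r)$ for large integral $r$. Via the long exact skein sequence relating $\widetilde{Kh}(T(r))$, $\widetilde{Kh}(T(r+1))$ and $\widetilde{Kh}(T(\infty))$, the invariant $\varkappa$ appears as a stable piece of $\widetilde{Kh}(T(r))$ for $r\gg0$. Watson's results give that if some filling $T(r)$, $r\neq\infty$, is thin, then $\varkappa$ is supported in a single $\delta$-grading. I expect the precise statement to be as follows: thinness of one rational filling propagates, via the exact triangles and continued-fraction induction on slopes, to thinness of all integral fillings beyond it. This forces the stable part, which is $\varkappa$, to be thin.

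Combining this with Theorem~\ref{thm:main}, which says $\varkappa(K_n,\Phi)$ is supported in at least two $\delta$-gradings, no non-trivial filling $T(r)$ is Khovanov thin. Hence no surgery $K_n(r)$ is the double branched cover of a thin link arising from $\Phi$.

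The main obstacle is twofold. First, one must make sure that every way of writing $K_n(r)$ as a double branched cover over a link comes from $\Phi$. For hyperbolic surgeries this requires that the covering involution extends to a symmetry of the knot exterior. I would argue this via the Mostow rigidity / symmetry group argument: the involution is isotopic to an isometry, and the core of the filling is a shortest geodesic for all but finitely many slopes. The remaining finitely many slopes, including exceptional ones, I would check by hand or by SnapPy-style verification. This may only be feasible uniformly in $n$ using the surgery description in Figure~\ref{fig:knot}. Second, one must check that the propagation argument applies to all rational slopes, not only integral ones. Here I would use Watson's result that a thin filling determines thinness of fillings adjacent in the Farey graph, and induct toward the slopes near $\infty$.
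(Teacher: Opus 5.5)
Your overall architecture matches the paper's. For all but finitely many slopes, the only description of $K_n(r)$ as a double branched cover of $S^3$ comes from $\Phi$; every branch set $T_n(r)$ is thick; and the leftover slopes are treated separately. However, both substantive steps have genuine gaps as written.

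\textbf{Thickness of $T_n(r)$.} For integral slopes you do not need propagation of thinness. That would in any case require a grading-alignment check even to pass from $T(m)$ to $T(m+1)$.

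The reduction to Theorem~\ref{thm:main} follows directly from the definition of $\varkappa$. If $(x_i)\in\mathbb{A}\setminus\mathbb{K}$ is homogeneous, then $x_i\neq 0$ for every $i$, because $x_{i-1}=f_i(x_i)$ forces a zero at one index to propagate to all smaller indices. So generators of $\varkappa$ in two $\delta$-gradings give nonzero classes in two $\delta$-gradings of every $\mathrm{Kh}(T_n(N))$. The paper reads this off directly from Lemma~\ref{lem:two-lines}, which gives nonzero groups at $\delta=N-8n-7$ and $\delta=N-8n-9$ for every $N$.

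For non-integral slopes, your mechanism fails. The skein triangle of a Farey triple $s,\ s\oplus s',\ s'$ lets you conclude that the mediant is thin when both parents are thin, given additivity of determinants and alignment of the $\delta$-shifts. It does not by itself give thinness of a parent from thinness of the mediant, since both parents may carry extra generators that cancel through the connecting map.

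In fact, ``a thin filling determines thinness of adjacent fillings'' is false: $T(\infty)$ is the unknot, while its Farey neighbour $T(0)$ has determinant $0$ and is therefore thick. So you cannot induct from one thin rational slope toward $\infty$.

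What you actually need is Lemma~\ref{lem:rational_thick}: thickness of $T(N)$ and $T(N+1)$ forces thickness of $T(r)$ for all $r\in[N,N+1]$. Its contrapositive is the upward step you want. It is proved by induction on the continued fraction expansion starting from \emph{both} integral endpoints (adapting Watson's Proposition~5.2), not from a local property of a single thin vertex of the Farey graph.

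\textbf{Non-generic slopes.} ``Check the remaining slopes by hand or by SnapPy'' is where most of the proof of Theorem~\ref{thm:no_thin} actually lies. As stated it cannot work: there are infinitely many $n$, and at a non-hyperbolic filling there is no symmetry-group computation to run.

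The paper argues uniformly on the link $L$ of Figure~\ref{fig:knot}:
\begin{itemize}
\item The $6$-theorem, plus SnapPy on finitely many fillings of $L$, shows that $8n+6$ is the only exceptional slope.
\item The Futer--Purcell--Schleimer bound shows that $8n+4$ and $8n+8$ are the only symmetry-exceptional slopes, with direct computation for $2\le n<9$ and \cite{BKM} for $n=1$.
\end{itemize}

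Each of these slopes then needs a separate argument:
\begin{itemize}
\item $K_n(8n+6)$ is a small Seifert fibered L-space, whose double branched cover description is unique by \cite[Lemma~4.1]{BKM}.
\item $K_n(8n+4)$ and $K_n(8n+8)$ have symmetry group $(\mathbb{Z}_2)^2$, so each has two further involutions. These must be exhibited and shown to have quotients $\mathbb{R}P^3$ and $-L(2n+1,n+1)$, resp.\ $L(2,1)$ and $L(4n+4,2n+1)$, rather than $S^3$.
\end{itemize}
Your proposal identifies neither these slopes nor these arguments, so it does not yet exclude a thin branch set that does not come from $\Phi$.
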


Note that Theorem~\ref{thm:main} and Theorem~\ref{thm:no_thin} give further evidence for an affirmative answer to Question 1.4 from~\cite{BKM}.

\subsection*{Individual grant support}
The first author is supported by a Ram\'on y Cajal grant (RYC2023-043251-I) and the project PID2024-157173NB-I00 funded by MCIN/AEI/10.13039/ 50110001\-1033, by ESF+, and by FEDER, EU; and by a VII Plan Propio de Investigación y Transferencia (SOL2025-36103) of the University of Sevilla.
The second author has been supported by
JSPS KAKENHI Grant Number JP25K07004.

\section{Strong inversions}

We start by proving the first part of Theorem~\ref{thm:main}.

\begin{lemma}
    For each $n\geq1$, the knot $K_n$ as defined above is a hyperbolic, strongly invertible L-space knot with a unique strong inversion $\Phi$.
\end{lemma}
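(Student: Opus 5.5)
The plan is to take hyperbolicity, the L--space property and strong invertibility essentially from \cite{BK}, and to spend the real work on uniqueness of the strong inversion.

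\textbf{L--space property and strong invertibility.} By \cite{BK}, $K_n$ is a strongly invertible L--space knot. To make this self-contained, I would use the surgery description of Figure~\ref{fig:knot}. The link $L$ there is symmetric under the $\pi$-rotation $\Phi$, which preserves each component and reverses orientation on each. Hence the diffeomorphism produced by the two surgeries carries $\Phi$ to an involution of $S^3$ that restricts to a strong inversion of $K_n$.

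For the L--space property, I would exhibit a positive surgery slope on $K_n$ (equivalently $r$ on $K$, giving slope $8n+4+r$ on $K_n$) that yields an L--space. The natural candidate is a slope at which the filling becomes a Seifert fibered space or a lens space, which can be checked uniformly in $n$ by Kirby calculus on Figure~\ref{fig:knot}. Alternatively, one can cite \cite{BK} directly.

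\textbf{Hyperbolicity.} $K_n$ is obtained from a fixed hyperbolic link complement by Dehn filling one cusp along slopes that depend linearly on $n$; here $K_1=o9\_30634$. So I would check with SnapPy/verified computation that the relevant link exterior is hyperbolic. Thurston's hyperbolic Dehn surgery theorem, made effective by the $6$--theorem or by Futer--Kalfagianni--Purcell type length bounds, then gives hyperbolicity for all $n$ beyond an explicit bound. The finitely many remaining small $n$ would be verified directly by certified computation (e.g.\ HIKMOT / SnapPy's \texttt{verify\_hyperbolicity}).

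An alternative route: $K_n$ is an L--space knot, hence fibered and strongly quasipositive. By the braid-positivity structure it is then neither a torus knot nor a satellite, using the classification of satellite L--space knots as cables with specific parameters together with Alexander polynomial or genus obstructions.

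\textbf{Uniqueness of the strong inversion.} This is the main obstacle. For a hyperbolic knot, the symmetry group of $(S^3,K_n)$ is finite cyclic or dihedral, and a hyperbolic knot has at most two inequivalent strong inversions. Two inequivalent strong inversions occur exactly when the symmetry group contains a period-$2$ symmetry or a free $\mathbb{Z}/2$ commuting with them, i.e.\ when the symmetry group is $D_2$-like rather than just $\mathbb{Z}/2$.

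So the plan is to show $\mathrm{Sym}(S^3,K_n)\cong\mathbb{Z}/2$. For small $n$ I would certify this with SnapPy's isometry group computation. For general $n$, I would argue via the filling: the isometry group of a large filling of the cusped manifold $M$ from Figure~\ref{fig:knot} embeds in the subgroup of $\mathrm{Isom}(M)$ preserving the filled cusp. This follows because for long filling slopes the core geodesic is the unique shortest geodesic, so every isometry of the filling preserves it. Computing $\mathrm{Isom}(M)$, and checking that only the $\pi$-rotation survives, then shows the symmetry group is $\mathbb{Z}/2$ generated by $\Phi$. Making "sufficiently large" explicit, and covering the small cases by computer, is where I expect the effort to lie.
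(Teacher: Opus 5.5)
Your proposal is correct and follows essentially the paper's route: \cite{BK} for the L--space property, strong invertibility read off from the symmetric surgery diagram, a SnapPy computation showing that the link $L$ has only one symmetry compatible with the fillings, and an effective Thurston/Futer--Purcell--Schleimer filling bound (the paper checks it holds for $n\ge 9$) together with direct SnapPy checks of the small cases to get $\mathrm{Sym}(S^3,K_n)\cong\mathbb{Z}_2$. Two small corrections: $K_n$ is obtained by filling two cusps of $L$ (slopes $1/(2n+3)$ and $-1/(n+1)$), so your shortest-geodesic argument should use the pair of core geodesics; and your aside that satellite L--space knots are classified as cables is inaccurate (only cable L--space knots are classified), though you do not need that alternative route.
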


\begin{proof}
    In \cite{BK} it was shown that the knots $K_n$ are L-space knots, by identifying an exceptional surgery to an L-space.
    
    From the surgery description along the link $L$ in Figure~\ref{fig:knot}, it follows directly that each $K_n$ is strongly invertible. Using SnapPy~\cite{SnapPy}, we show that this is the only symmetry of $K_n$. 

    First, we establish that $L$ (which is called $L12n1739$ in the HTW census~\cite{HTW}) is hyperbolic and possesses a unique strong inversion. For that, we use SnapPy~\cite{SnapPy} to compute the symmetry group of orientation-preserving diffeomorphisms for the link $L$, and find that there exists exactly one non-trivial symmetry that extends to the link itself, which has to be the symmetry visible in Figure~\ref{fig:knot}. This unique strong inversion inherently survives the Dehn filling that yields the knot $K_n$. 

    By Thurston's hyperbolic Dehn filling theorem~\cite{Thurston}, we know that any filling of $L$ with all slopes sufficiently large is again hyperbolic and has the same symmetry group as $L$, and recent work of Futer--Purcell--Schleimer~\cite{FPS} gives a computable bound for that, cf.~\cite{BKM2}. We compute this bound and check that for $n\geq9$ the slopes are sufficiently large, so that we conclude that $K_n$ has a unique strong inversion if $n\geq9$. For the finitely many small values of $n$, we build the knot $K_n$ in SnapPy, verify that it is hyperbolic and that its symmetry group also has a unique strong inversion.

For the code and computations, we refer to~\cite{data}.
\end{proof}

\section{Branching set}

Figure \ref{fig:tangle}(a) shows the tangle $T_n$ obtained from the surgery description of $K_n$ shown in Figure \ref{fig:knot}
by taking the quotient under the strong inversion $\Phi$.
The outside of the circle gives the tangle, so its double branched cover restores the exterior of $K_n$.
The two vertical boxes labeled with an integer $m$ contain vertical $m$ half twists.
If $m>0$, then it is right-handed, otherwise left-handed.
Similarly, the horizontal box labeled with the integer $-(8n+6)$ contains $8n+6$ left-handed horizontal half twists.
For a rational tangle $r$, $T_n(r)$ denotes the knot or link obtained by filling the $r$-rational tangle to $T_n$.
In Figure \ref{fig:tangle}(a), the dotted lines indicate $0$-framing.
That is, it corresponds to the longitudinal surgery on $K_n$ in the double cover \cite{Mo}.
Figure \ref{fig:tangle}(b) shows $T_n(8n+6)$.

\begin{figure}[htbp]
\includegraphics*[width=0.9\textwidth]{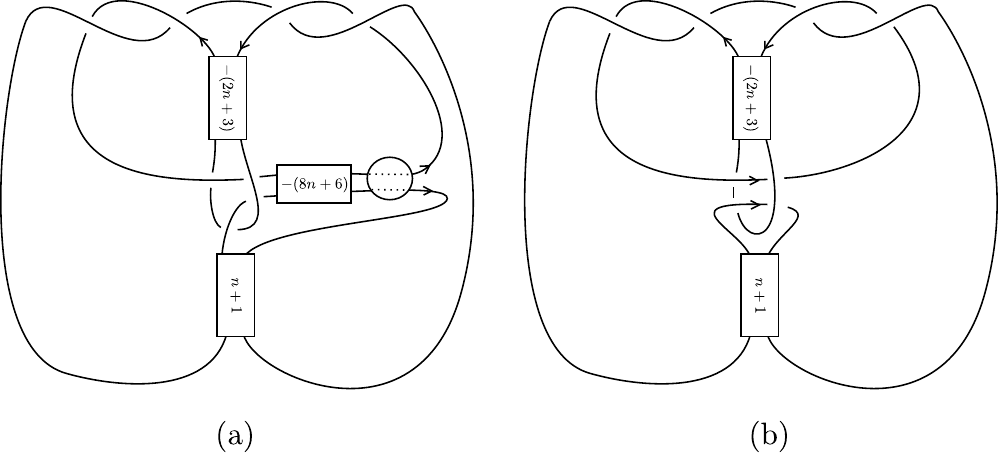}
\caption{(a) The outside of the circle is the tangle $T_n$, whose double branched cover is the exterior of $K_n$.
(b) shows the link $T_n(8n+6)$ which is obtained as the tangle filling of $T_n$ with the rational tangle $8n+6$.
}\label{fig:tangle}
\end{figure}

In this article, we use reduced Khovanov homology with coefficients in $\mathbb{Z}_2$ as done in~\cite{Wa0,Wa,BKM,Te2}.
For a knot or link $L$, we write
$\mathrm{Kh}_q^h(L)$ for the reduced Khovanov homology with $\mathbb Z_2$-coefficients in homological grading $h$ and quantum grading $q$.

\begin{lemma}\label{lem:N=8n+6}
For all $n\geq 1$, we have
\begin{align*}
&\mathrm{Kh}_{-9}^{-4}(T_n(8n+6))\cong\mathrm{Kh}_{4n+5}^{2n+4}(T_n(8n+6))\cong\mathbb{Z}_2,\\
&\mathrm{Kh}^{-3}_{-10}(T_n(8n+5))\cong\mathrm{Kh}^{2n+3}_{6n+10}(T_n(10n+11))\cong0.
\end{align*}
\end{lemma}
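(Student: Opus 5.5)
We will prove the lemma by computing the relevant bigraded pieces of reduced Khovanov homology over $\mathbb{Z}_2$ for the three infinite families of links $T_n(8n+6)$, $T_n(8n+5)$ and $T_n(10n+11)$. The approach is induction on $n$ via the Khovanov skein exact triangle applied to a crossing inside one of the twist regions. The base cases are handled by direct computer calculation (e.g.\ with KnotJob or the program of Bar-Natan), at the small values of $n$ needed to start the induction.

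\textbf{Setting up the triangle.} The parameter $n$ enters $T_n(r)$ in two places: the twist boxes of Figure~\ref{fig:tangle}(a) (through the braid power $(2,1,3,2)^{2n+1}$, visible as twist regions labeled in terms of $n$) and the filling slope $r$. The first step is therefore to redraw each link $T_n(8n+6)$, $T_n(8n+5)$, $T_n(10n+11)$ so that every $n$-dependence sits in one or two twist regions. For $T_n(8n+6)$, Figure~\ref{fig:tangle}(b) already cancels the $-(8n+6)$ box against the filling, so only the vertical boxes remain. For the other two slopes, a fixed rational residue is left over: the filling $8n+5$ leaves one extra crossing, and $10n+11$ leaves a box of $2n+5$ twists.

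\textbf{Inductive step.} Resolving a crossing in a twist box gives the standard long exact sequence relating the link with $m$ twists, the link with $m-1$ twists, and a resolution that is independent of $m$ (typically a simpler link, often a two-bridge or torus-type link with known homology), with grading shifts depending on orientations and the count of negative crossings. Iterating this, or using the known stabilization of Khovanov homology of twisted links, we show that in the specified bidegree the groups stabilize up to a predictable shift: adding one full twist shifts the relevant generator by $(h,q)\mapsto(h+2,q+4)$ or similar. This matches the pattern $\mathrm{Kh}^{2n+4}_{4n+5}$, which grows by $(2,4)$ per step, and $\mathrm{Kh}^{2n+3}_{6n+10}$, which grows by $(2,6)$; the latter reflects the extra twisting in the $10n+11$ filling. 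The fixed degrees $(-4,-9)$ and $(-3,-10)$ should come from the part of the complex far from the twist region, so they are unchanged by the induction once we check that the connecting maps vanish there for degree reasons.

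\textbf{Main obstacle.} The hard part is controlling the connecting homomorphisms. At the extremal bidegrees we aim to show that the third term of the triangle vanishes in the adjacent degrees, by bounding its support via its $\delta$-gradings or its homological width. The resolved links are simpler, quasi-alternating or torus-like, so their homology is explicitly known. Computing orientation-dependent grading shifts for each resolution is error-prone. If the degree-reason argument fails, the fallback is a computer verification for a range of $n$, combined with a Bar-Natan-style stable-limit argument for twisted links, which shows that the bidegrees in question are determined once $n$ is large.
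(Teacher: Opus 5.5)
Your outline has the right general shape (iterated skein sequences in twist regions, with the non-recursive resolution a link of known homology), but it defers every step that carries weight. The structure you do commit to is also not what makes the argument close.

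\textbf{The recursion.} An induction on $n$ with computer-checked small-$n$ base cases is problematic. Write $D^i_j$ for the diagram with an upper vertical box of $|i|$ left-handed half twists and a lower box of $j$ right-handed half twists. Then $T_{n-1}(8n-2)=D^{-(2n+1)}_{n}$ and $T_n(8n+6)=D^{-(2n+3)}_{n+1}$ differ in both boxes. You never say which crossing to resolve, what the third link is, or why it is independent of $n$. The missing idea is to fix $n$ and recurse on the twist parameters in a definite order:
\begin{itemize}
\item Resolving the top crossing of the upper box of $D^i_j$ gives the unoriented resolution $D^{i+1}_j$. The oriented resolution is a connected sum of two Hopf links $H_\pm$ (sign given by the parity of $i$), independent of $j$.
\item Once the upper box is empty, resolving in the lower box of $D^0_j$ gives $D^0_{j-1}$ and a fixed link $F_\pm$ depending only on the parity of $j$.
\item The recursion ends at the Hopf link $D^0_0$.
\end{itemize}
So the only computer input is $\mathrm{Kh}(H_\pm)$ and $\mathrm{Kh}(F_\pm)$, and no base cases in $n$ are needed.

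\textbf{Grading shifts.} You also need the orientation bookkeeping. Since $D^{i+1}_j$ must be reoriented, $n_-(D^i_j)$ is $j+6$ or $j+2$ according to the parity of $i$. Hence the shift $c$ alternates between $-4$ and $4$. Your guessed $(2,4)$ per full twist is only the two-step composite. The flanking $H_\pm$ terms must be checked to vanish at every intermediate bidegree.

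\textbf{Where the classes live.} Your heuristic here is off, and the ``connecting maps vanish for degree reasons'' step is essentially the whole proof.
\begin{itemize}
\item $\mathrm{Kh}^{-4}_{-9}(T_n(8n+6))$ is identified with $\mathrm{Kh}^{-4}_{-10}(H_-)\cong\mathbb{Z}_2$. This comes from the oriented resolution of the very first crossing of the upper box, not from far away from the twist region. The identification needs $\mathrm{Kh}^{-1}_1$ and $\mathrm{Kh}^{0}_1$ of $D^{-(2n+2)}_{n+1}$ to vanish, which you only get by carrying these bidegrees all the way down to $D^0_0$.
\item The class in $\mathrm{Kh}^{2n+4}_{4n+5}$ is transported through the upper box to $\mathrm{Kh}^{5}_{11}(D^0_{n+1})$. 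There it is the image of $\mathrm{Kh}^5_{12}(F_\pm)\cong\mathbb{Z}_2$, using $\mathrm{Kh}^5_{13}(D^0_n)=\mathrm{Kh}^6_{13}(D^0_n)=0$.
\item For the other two slopes, resolve the extra crossing of $T_n(8n+5)$, resp.\ the $2n+5$ crossings of the residual box of $T_n(10n+11)$. Each time the other resolution is an unknot, whose reduced homology sits in bidegree $(0,0)$ and drops out. This reduces to the vanishing of $\mathrm{Kh}^{-3}_{-9}$ and $\mathrm{Kh}^{2n+3}_{4n+5}$ of $T_n(8n+6)$, which again follows from the same descent.
\end{itemize}

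None of this is carried out in your proposal. Your fallback, computer checks for finitely many $n$ plus an unquantified stable-limit argument, does not prove the statement for all $n\ge 1$. You would need an explicit stable range, with both boxes growing simultaneously, that contains bidegrees such as $(2n+4,4n+5)$ which move with $n$.
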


\begin{proof}
This follows from Lemmas \ref{lem:8n+6}, \ref{lem:8n+5} and \ref{lem:10n+11} below.
\end{proof}

Assuming Lemma~\ref{lem:N=8n+6}, we prove the following.

\begin{lemma}\label{lem:two-lines}
For every integer $N$, we have
\begin{align}
    \mathrm{Kh}_{N-8n-15}^{-4}(T_n(N))&\cong
\begin{cases}
\mathbb{Z}_2&  \text{if $N\ge 8n+5$},\\
\mathbb{Z}_2^2 & \text{if $N\le 8n+4$}.
\end{cases}\\
\mathrm{Kh}_{N-4n-1}^{2n+4}(T_n(N))&\cong
\begin{cases}
\mathbb{Z}_2^2&  \text{if $N\ge 10n+12$},\\
\mathbb{Z}_2 & \text{if $N\le 10n+11$}.
\end{cases}
\end{align}
\end{lemma}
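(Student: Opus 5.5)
We argue by induction on $N$ in both directions, using the unoriented skein exact triangle relating the three tangle fillings $T_n(N)$, $T_n(N+1)$ and $T_n(\infty)$. Resolving one crossing of the $N+1$ horizontal half-twists gives a long exact sequence in reduced Khovanov homology of the form
\[
\cdots\to \mathrm{Kh}^{h-1}_{q-1}(T_n(N))\to \mathrm{Kh}^{h}_{q}(T_n(N+1))\to \mathrm{Kh}^{h+c}_{q+c'}(T_n(\infty))\to \mathrm{Kh}^{h}_{q-1}(T_n(N))\to\cdots
\]
Here $c,c'$ are grading shifts that depend only on $N$, through the linking/writhe data. First we record the global shift conventions. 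The claimed quantum gradings $N-8n-15$ and $N-4n-1$ move by one as $N$ moves by one, and the homological grading stays fixed. This is exactly the pattern of a skein sequence for a crossing whose $0$-resolution gives $T_n(N)$ after normalising orientations, which is the situation of Watson's setup in~\cite{Wa}.

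The key input is that $T_n(\infty)$, the closure along the other slope, is a simple link (the tangle closed off by the $\infty$ filling). We compute its reduced Khovanov homology directly from Figure~\ref{fig:tangle}; we expect it to be a two-bridge or unknot-like link whose homology is thin, supported in a single $\delta$-grading. After the shift, that $\delta$-line hits the bidegrees $(h,q)=(-4,N-8n-15)$ and $(2n+4,N-4n-1)$ only in a controlled way. Concretely, the sequence yields, for each fixed line, an exact piece
\[
0\to \mathrm{Kh}(T_n(N))\to \mathrm{Kh}(T_n(N+1))\to \mathbb{Z}_2^{\epsilon(N)}\to\cdots,
\]
or its reverse. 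In this piece the dimension along the line can jump only at the single value of $N$ where the contribution of $T_n(\infty)$ lands in the relevant bidegree. Away from that value the map is an isomorphism.

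Next we carry out the induction. For the first line, Lemma~\ref{lem:N=8n+6} gives dimension $1$ at $N=8n+6$ and dimension $0$ in the neighbouring bidegree $(-3,-10)$ for $T_n(8n+5)$. Feeding these into the sequence at $N=8n+5$ forces the dimension to be $1$ at $N=8n+5$ and then $2$ at $N=8n+4$. The vanishing at $(-3,-10)$ kills the connecting map that could otherwise cancel the new generator. For $N\ge 8n+6$ and for $N\le 8n+4$, the $T_n(\infty)$ term is zero in the relevant bidegrees, so the dimension propagates unchanged. The second line is handled the same way: we start from $\dim=1$ at $N=8n+6$, propagate up to $N=10n+11$ where the $T_n(\infty)$ term appears, and use the vanishing of $\mathrm{Kh}^{2n+3}_{6n+10}(T_n(10n+11))$ to show the jump to $\mathbb{Z}_2^2$ at $N=10n+12$ occurs with no cancellation.

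The main obstacle will be the bookkeeping of grading shifts in the skein sequence. These depend on orientations and on how the number of negative crossings changes with $N$, and that in turn depends on whether $T_n(N)$ is a knot or a two-component link, so the parity of $N$ matters. We would first fix orientations for even and odd $N$ separately, verify that the shifts are uniform in $N$ up to the linear drift in $q$, and only then pin down exactly which single value of $N$ produces the nonzero contribution from $T_n(\infty)$ on each line.
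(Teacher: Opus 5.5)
Your plan has the same architecture as the paper's proof. It uses skein sequences in the twist region relating $T_n(N)$, its neighbour one step closer to $8n+6$, and $T_n(\infty)$. It is seeded at $N=8n+6$ by Lemma~\ref{lem:N=8n+6}, and the two vanishing statements are used exactly as you describe to prevent cancellation at the jumps. But the lemma \emph{is} the grading bookkeeping, and you defer all of it. The jump locations $8n+4$ and $10n+12$, and the fact that the rank rises rather than falls there, are read off the statement rather than derived.

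Several things you did commit to also need correcting.
\begin{itemize}
\item $T_n(\infty)$ must be identified as the unknot: its double branched cover is the trivial filling $K_n(\infty)=S^3$. Thinness alone is not enough. Along either line both $q$ and the shift $c=n_-(D_\infty)-n_-(D)$ increase by one with $N$. So the two relevant bidegrees $(h-c-1,q-3c-2)$ and $(h-c,q-3c-2)$ of $T_n(\infty)$ sweep out entire $\delta$-diagonals, and a thin link of rank greater than one could create further jumps.
\item Your displayed sequence is mis-graded. The map between neighbouring fillings goes from $T_n(N+1)$ to $T_n(N)$ (it is Watson's $f_{N+1}$), preserves $h$, and lowers $q$ by one. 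The $T_n(\infty)$ terms sit exactly where the homological degree of the $T_n(N)$, $T_n(N\pm1)$ terms steps up. With the $h$-raising map $T_n(N)\to T_n(N+1)$ you wrote, lines of constant $h$ could not arise.
\item The parity of $N$ plays no role: orient the twist region so that the oriented resolution of $T_n(N)$ is always the neighbour closer to $8n+6$. The real case split is the sign of the crossing being resolved.
\end{itemize}

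Concretely, merge the $N$-tangle with the $-(8n+6)$ box.
\begin{itemize}
\item For $N\ge 8n+7$ you resolve a positive crossing, with $n_-(D)=n+7$ and $n_-(D_\infty)=N-7n-2$. So $c=N-8n-9$ and the oriented resolution is $T_n(N-1)$.
\item For $N\le 8n+5$ you resolve a negative crossing, with $n_-(D)=9n+13-N$ and $n_-(D_\infty)=n+5$. So $c=N-8n-8$ and the oriented resolution is $T_n(N+1)$.
\end{itemize}

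On the line $(h,q)=(-4,N-8n-15)$:
\begin{itemize}
\item In the positive regime the two unknot bidegrees are $(8n+4-N,16n+10-2N)$ and $(8n+5-N,16n+10-2N)$, never $(0,0)$.
\item In the negative regime they are $(8n+3-N,16n+8-2N)$ and $(8n+4-N,16n+8-2N)$. Only the second equals $(0,0)$, at $N=8n+4$. That term follows $\mathrm{Kh}^{-4}_{-11}(T_n(8n+4))$ and is followed by $\mathrm{Kh}^{-3}_{-10}(T_n(8n+5))=0$. So it is hit surjectively and the rank rises from $1$ to $2$.
\end{itemize}

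On the line $(h,q)=(2n+4,N-4n-1)$:
\begin{itemize}
\item For $N\ge 8n+7$ the bidegrees are $(10n+12-N,20n+24-2N)$ and $(10n+13-N,20n+24-2N)$. Only the first hits $(0,0)$, at $N=10n+12$, just before $\mathrm{Kh}^{2n+4}_{6n+11}(T_n(10n+12))$. It injects because the preceding term $\mathrm{Kh}^{2n+3}_{6n+10}(T_n(10n+11))$ vanishes.
\item For $N\le 8n+5$ the bidegrees are $(10n+11-N,20n+22-2N)$ and $(10n+12-N,20n+22-2N)$, which never hit $(0,0)$.
\end{itemize}

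Carrying out exactly these computations turns your outline into the paper's proof.
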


\begin{proof}
First, we prove (3.1).
Suppose $N\ge 8n+7$.
Then the $N$-tangle and the box with the integer $-(8n+6)$ are merged into an integer tangle corresponding
to $N-(8n+6)\ge 1$.
Let us call this diagram $D$.
Then $n_-(D)=n+7$, where $n_-$ denotes the number of negative crossings.
Choose the leftmost crossing there, which is positive.
Let $D_-$ and $D_\infty$ be the resulting diagrams after resolving the crossing horizontally or vertically.
Then $D_-$ corresponds to $T_n(N-1)$, and $D_\infty$ represents the unknot.
In addition, $n_-(D_\infty)=n+5+(N-8n-7)$.
Thus $c=n_-(D_\infty)-n_-(D)=N-8n-9$, so
Viro's long exact sequence for a positive crossing (see for example \cite{Wa0}) yields
\begin{equation}\label{eq:long-positive}
\dots \to \mathrm{Kh}_{q-3c-2}^{h-c-1}(D_\infty) \to \mathrm{Kh}_q^h(D) \to \mathrm{Kh}_{q-1}^h(D_-) \to 
\mathrm{Kh}_{q-3c-2}^{h-c}(D_\infty) \to \cdots.
\end{equation}

For us, $(h,q)=(-4, N-8n-15)$.
Then $\mathrm{Kh}_{q-3c-2}^{h-c-1}(D_\infty)\cong0$, since
 $h-c-1=0$ and $q-3c-2=0$ cannot hold simultaneously.
Also, $\mathrm{Kh}_{q-3c-2}^{h-c}(D_\infty)\ncong 0$ only when  $N=8n+5$.
Thus $\mathrm{Kh}_q^h(D)\cong \mathrm{Kh}_{q-1}^h(D_-)$ under the assumption $N\ge 8n+7$.
By Lemma \ref{lem:N=8n+6}, we have $\mathrm{Kh}_{-9}^{-4}(T_n(8n+6))\cong\mathbb{Z}_2$.
Hence
\[
\dots\cong \mathrm{Kh}^{-4}_{-7}(T_n(8n+8)) \cong \mathrm{Kh}^{-4}_{-8}(T_n(8n+7))\cong \mathrm{Kh}^{-4}_{-9}(T_n(8n+6))\cong\mathbb{Z}_2.
\]

Suppose $N\le 8n+5$.
In this case, the merged tangle of the $N$-tangle and the box containing $8n+6$ left half twists is
an integer tangle corresponding to $N-(8n+6)\le -1$, which contains only negative crossings.
Then $n_-(D)=9n+13-N$.
As above, we choose the leftmost crossing there. Since $n_-(D_\infty)=n+5$,
$c=n_-(D_\infty)-n_-(D)=N-8n-8$.
Hence, the long exact sequence for a negative crossing yields
\begin{equation}\label{eq:long-negative}
\begin{split}
\dots &\to \mathrm{Kh}_{q-3c-1}^{h-c-1}(D_\infty) \to \mathrm{Kh}_{q+1}^h(D_-) \to \mathrm{Kh}_{q}^h(D) \\
&\to \mathrm{Kh}_{q-3c-1}^{h-c}(D_\infty) \to \mathrm{Kh}_{q+1}^{h+1}(D_-)\to \cdots.
\end{split}
\end{equation}
Recall $(h,q)=(-4,N-8n-15)$.
Then $\mathrm{Kh}_{q-3c-1}^{h-c-1}(D_\infty)\cong 0$, and
$\mathrm{Kh}^{h-c}_{q-3c-1}(D_\infty)\ncong 0$ only when $N=8n+4$.
This implies that
\[
\mathrm{Kh}^{-4}_{-9}(T_n(8n+6))\cong \mathrm{Kh}^{-4}_{-10}(T_n(8n+5)), 
\]
and
\[
\mathrm{Kh}^{-4}_{-11}(T_n(8n+4))\cong  \mathrm{Kh}^{-4}_{-12}(T_n(8n+3))\cong \mathrm{Kh}^{-4}_{-13}(T_n(8n+2))\cong \cdots.
\]
Finally, by using Lemma \ref{lem:N=8n+6},
\[
\begin{split}
0&\cong\mathrm{Kh}^{-1}_{0}(D_\infty) \to \mathrm{Kh}^{-4}_{-10}(T_n(8n+5))\to \mathrm{Kh}^{-4}_{-11}(T_n(8n+4))\\
&\to \mathrm{Kh}^{0}_{0}(D_\infty) \to \mathrm{Kh}^{-3}_{-10}(T_n(8n+5))\cong0.
\end{split}
\]
Thus
\[
\mathrm{Kh}^{-4}_{-11}(T_n(8n+4))\cong\mathbb{Z}_2^2,
\]
which implies (3.1).

The argument for (3.2) is similar to the argument for (3.1).
Set $(h,q)=(2n+4,N-4n-1)$. Suppose $N\ge 8n+7$.
The long exact sequence (\ref{eq:long-positive}) yields
\begin{equation}
\begin{split}
\dots &\to \mathrm{Kh}_{20n+24-2N}^{10n+12-N}(D_\infty) \to \mathrm{Kh}_{N-4n-1}^{2n+4}(D) \to \mathrm{Kh}_{N-4n-2}^{2n+4}(D_-)\\
& \to 
\mathrm{Kh}_{20n+24-2N}^{10n+13-N}(D_\infty) \to \cdots.
\end{split}
\end{equation}
Since $\mathrm{Kh}_{20n+24-2N}^{10n+12-N}(D_\infty)\cong\mathrm{Kh}_{20n+24-2N}^{10n+13-N}(D_\infty)\cong0$, if $N\ne 10n+12$, we have
\[
\cdots \cong\mathrm{Kh}_{6n+13}^{2n+4}(T_n(10n+14))\cong \mathrm{Kh}_{6n+12}^{2n+4}(T_n(10n+13))\cong\mathrm{Kh}_{6n+11}^{2n+4}(T_n(10n+12)),
\]
and
\[
\mathrm{Kh}^{2n+4}_{6n+10}(T_n(10n+11))\cong\mathrm{Kh}^{2n+4}_{6n+9}(T_n(10n+10)) \cong \cdots \cong \mathrm{Kh}^{2n+4}_{4n+5}(T_n(8n+6))\cong\mathbb{Z}_2.
\]
When $N=10n+12$, (\ref{eq:long-positive}) reduces to
\[
0\cong\mathrm{Kh}^{2n+3}_{6n+10}(D_-)\to \mathrm{Kh}^0_{0}(D_\infty) \to \mathrm{Kh}^{2n+4}_{6n+11}(D) \to \mathrm{Kh}^{2n+4}_{6n+10}(D_-) \to \mathrm{Kh}^1_0 (D_\infty) \cong0
\]
by Lemma \ref{lem:N=8n+6}.
Thus $\mathrm{Kh}_{6n+11}^{2n+4}(T_n(10n+12))\cong\mathbb{Z}_2^2$.

Finally, assume $N\le 8n+5$.
As in (3.1), the long exact sequence (\ref{eq:long-negative}) yields
\begin{equation*}
\mathrm{Kh}_{20n+22-2N}^{10n+11-N}(D_\infty) \to \mathrm{Kh}_{N-4n}^{2n+4}(D_-) \to \mathrm{Kh}_{N-4n-1}^{2n+4}(D) \\
\to \mathrm{Kh}_{20n+22-2N}^{10n+12-N}(D_\infty).
\end{equation*}
Since $\mathrm{Kh}_{20n+22-2N}^{10n+11-N}(D_\infty)\cong\mathrm{Kh}_{20n+22-2N}^{10n+12-N}(D_\infty)\cong0$,
we have
\[
\mathrm{Kh}^{2n+4}_{4n+5}(T_n(8n+6))\cong\mathrm{Kh}^{2n+4}_{4n+4}(T_n(8n+5))\cong \mathrm{Kh}^{2n+4}_{4n+3}(T_n(8n+4))\cong \cdots.
\]
This completes the proof.
\end{proof}

Before proving Theorem~\ref{thm:main}, we briefly recall the definition of the Watson invariant \cite{Wa}.
For an integer $i$, 
let $A_i$ be $\oplus_{h,q} \mathrm{Kh}^h_q(T_n(i))$.
Then the long exact sequence  yields
a homomorphism $f_i\colon A_i \to A_{i-1}$.
Let $\mathbb{A}$ be the inverse limit of the system $(A_i,f_i)_{i\in \mathbb{Z}}$.
This means  that $\mathbb{A}$ consists of
sequences $(x_i)_{i\in \mathbb{Z}}$ such that $x_i \in A_i$ and $f_i(x_i)=x_{i-1}$ for any $i$.
Let $\mathbb{K}$ be the subspace of $\mathbb{A}$ consisting of sequences such that 
$x_i=0$ for all $i$ sufficiently small.
Then the Watson invariant $\varkappa(K_n,\Phi)$ is the quotient space $\mathbb{A}/\mathbb{K}$.

Since the linear map $f_i$ preserves the  homological grading, there is an absolute $\mathbb{Z}$-grading $h$ on $\varkappa$.
Thus $\varkappa(K_n,\Phi)=\oplus_{h} \varkappa^h(K_n,\Phi)$.
Also, $\varkappa$ inherits a relative $q$-grading, so there exists a relative $\delta$-grading, with $\delta=q-2h$.

\begin{proof}[Proof of Theorem \ref{thm:main}]
By Lemma \ref{lem:two-lines},
$\varkappa^h(K_n,\Phi)\ncong 0$ for $h=-4$ and $2n+4$.
In fact, $\varkappa^{-4}$ has rank $\geq 1$, and it has a generator having
$\delta$-grading $q-2h=-8n-7$, where $q$ and $h$ are normalized at $N=0$.

Similarly,
$\varkappa^{2n+4}$ has rank $\ge 1$ having a generator with $\delta=-8n-9$.
Thus $\varkappa(K_n,\Phi)$ is supported in at least two $\delta$-gradings.
\end{proof}

\section{The link $T_n(8n+6)$}

In this section, we prove the first half of Lemma \ref{lem:N=8n+6}.
That is,  we calculate $\mathrm{Kh}^{-4}_{-9}(T_n(8n+6))$ and 
$\mathrm{Kh}^{2n+4}_{4n+5}(T_n(8n+6))$ for the two-component link $T_n(8n+6)$
as illustrated in Figure \ref{fig:tangle}(b).

For convenience, we introduce an oriented diagram $D^i_j$ for integers $i<0$ and $j>0$ as shown in Figure \ref{fig:D-ij}(a).
A vertical box with integer $i$ contains vertical $|i|$ left-handed half twists.
Another box with integer $j$ contains vertical $j$ right-handed half twists.
Thus $T_n(8n+6)$ is represented by $D^{-(2n+3)}_{n+1}$.

\begin{figure}[htbp]
\includegraphics*[width=0.8\textwidth]{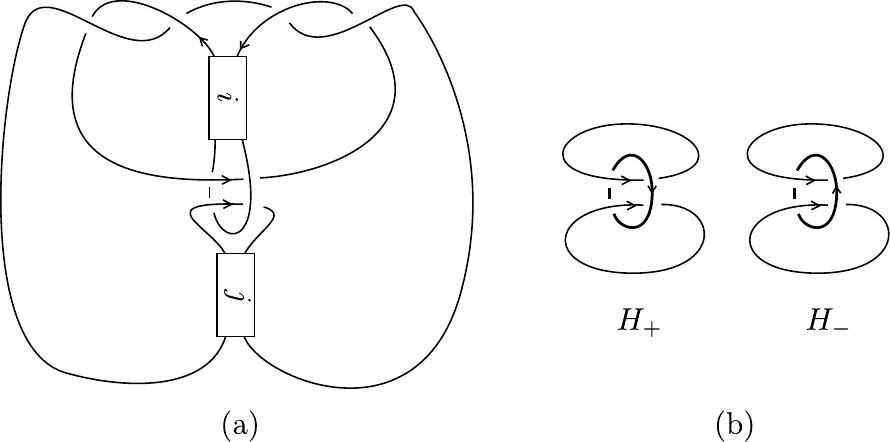}
\caption{(a) The oriented diagram $D^i_j$.
(b) After resolving the topmost positive crossing in the box with $i$ horizontally,  we have the connected sum of two Hopf links.
It is $H_+$ when $i$ is even, and $H_-$ when $i$ is odd.
}\label{fig:D-ij}
\end{figure}

Choose the topmost positive crossing in the box with $i$.
We resolve this crossing in two ways, as usual.
Then the horizontal resolution yields the connected sum of two Hopf links $H_\pm$ as shown in Figure \ref{fig:D-ij}(b).
If $i$ is even, it is $H_+$, otherwise $H_-$.
The vertical resolution gives $D^{i-1}_{j}$, whose orientation is not induced from that of $D^i_j$, but
we preferentially keep the (local) orientation indicated by arrows in Figure \ref{fig:D-ij}(a) and extend it to the whole.

\begin{lemma}\label{lem:c}
The diagram $D^i_j$ has
\[
n_-(D^i_j)=
\begin{cases}
j+6 & \text{if $i$ is odd},\\
j+2 & \text{if $i$ is even}.
\end{cases}
\]
\end{lemma}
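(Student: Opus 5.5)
The count is a direct inspection of the oriented diagram $D^i_j$ in Figure~\ref{fig:D-ij}(a), so I would organize the crossings into three groups and determine the sign of each group separately. The groups are: the $|i|$ crossings in the box labeled $i$, the $j$ crossings in the box labeled $j$, and the remaining fixed crossings outside the two boxes, which come from the part of $T_n(8n+6)$ not depending on $i,j$. The sign of a crossing is determined by the orientations of its two strands. Changing $i$ by one adds a half twist and changes how the two strands entering the $i$-box are connected, so I will split into the two parity cases of $i$.

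For the box with $j$: the two strands through this box keep the same relative orientation for all $j$, because the local orientation near the $j$-box is fixed by the arrows in the figure. I will check from the figure that they run antiparallel, so that each right-handed vertical half twist is a negative crossing. This box then contributes exactly $j$ negative crossings in both parity cases, which is consistent with the formula. For the box with $i$: the convention stated before the lemma is that the topmost crossing of the $i$-box is positive. I will check that the two strands in the $i$-box are parallel for the chosen orientation, so that all $|i|$ left-handed vertical twists are positive and the box contributes $0$ negative crossings. The parity of $i$ decides which endpoint each strand exits from. Hence the orientation of the strand that continues through the rest of the diagram, relative to the fixed arrows, flips with the parity of $i$.

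The main step is the fixed crossings outside the boxes. I will label them in the figure (they come from the clasps and the remaining twisting of the original tangle) and orient the diagram in both cases. In one case the orientation of one component, or of one arc relative to the others, is reversed. The claim then reduces to checking that exactly $6$ of the outside crossings are negative when $i$ is odd and exactly $2$ when $i$ is even. In other words, reversing the orientation of the relevant arc flips the sign of exactly the crossings between that arc and the rest of the diagram, and their net effect on $n_-$ is $+4$. As a consistency check I will verify the base case $D^{-(2n+3)}_{n+1}$, with $i$ odd: this gives $n_-=n+7$, which agrees with the value $n_-(D)=n+7$ used in the proof of Lemma~\ref{lem:two-lines} for $N=8n+6$. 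The even case can be checked against the Hopf-link resolution in Figure~\ref{fig:D-ij}(b).

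The main obstacle is bookkeeping. One has to read the orientations correctly from the figure for the two parities, and confirm that the $i$-box crossings really are all positive in both cases rather than only the topmost one. If they were not, the count would acquire a dependence on $|i|$, which the formula does not have. That independence is itself evidence that the twists in the $i$-box are between parallel strands.
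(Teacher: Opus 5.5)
Your strategy, a direct sign count in Figure~\ref{fig:D-ij}(a), is the same as the paper's, whose proof is a one-line ``direct calculation''. However, your orientation analysis of the $i$-box is wrong, and the $i$-box is exactly what produces the parity dependence.

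The two strands in the $i$-box are \emph{antiparallel}, not parallel. The paper shows this. At an $i$-box crossing, the horizontal smoothing $H_\pm$ is the oriented resolution: it sits in the $\mathrm{Kh}^h_{q-1}$ slot of (\ref{eq:long}). The vertical smoothing, which deletes one twist, does not inherit the orientation, and it is the diagram from which $c=-4$ and $c=4$ are computed. In a vertical twist region, the oriented smoothing is the horizontal one exactly when the strands run in opposite directions.

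Your two sign claims are also incompatible with each other. Mirroring a twist flips its sign, and so does reversing one strand. Hence a left-handed twist between parallel strands has the same sign as a right-handed twist between antiparallel strands, which you correctly say is negative.

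More seriously, parallel strands would destroy your parity mechanism. If both strands ran downward, the in/out labels at the four endpoints of the box would be the same for every $i$. Then the orientation outside the box, and hence the outside count, could not depend on the parity of $i$. The jump by $4$ happens because the strands are antiparallel. When $i$ changes parity, the in/out labels at the end of the box away from the fixed arrows swap, which reverses the part of the diagram joining those two endpoints. The $4$ is the net sign change of that part's crossings with the rest of the diagram.

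Your closing remark is a non sequitur. The crossings of a two-strand twist region share a sign whether the strands are parallel or antiparallel. So independence from $|i|$ only shows that the $i$-box crossings are positive.

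The same mechanism operates at the $j$-box, whose strands you rightly take to be antiparallel. When $j$ changes parity, the part of the diagram joining the two endpoints at the far end of the $j$-box is reversed relative to the fixed arrows. The paper exhibits this effect: the oriented resolution of a lower-box crossing of $D^0_{n+1}$ is $F_+$ or $F_-$ according to the parity of $n$.

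Consequently, the signs of your ``fixed'' outside crossings depend on the parities of both $i$ and $j$. Your reduction to ``exactly $6$ outside negatives for $i$ odd, $2$ for $i$ even'' therefore skips a step. You must either carry out the count in all four parity classes of $(i,j)$, or check that the crossings of the $j$-reversed part with the rest are balanced in sign, so that this reversal leaves $n_-$ unchanged. That the formula does not see the parity of $j$ is a property of this particular diagram that has to be verified. It does not follow from the local arrows near the $j$-box.
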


\begin{proof}
This easily follows from a direct calculation in the diagram $D^i_j$.
\end{proof}

\begin{lemma}\label{lem:H2}
The Khovanov homology groups of the links $H_\pm$ are
\[
\mathrm{Kh}^h_q(H_\pm)\cong
\begin{cases}
\mathbb{Z}_2 & \text{if $(h,q)=(\pm 4,\pm 10), (0, \pm 2)$}\\
\mathbb{Z}_2^2 &  \text{if $(h,q)=(\pm 2, \pm 6)$}\\
0 & \text{otherwise.}
\end{cases}
\]
\end{lemma}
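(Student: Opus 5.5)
We compute $\mathrm{Kh}(H_\pm)$ from the connected sum structure rather than from a state sum. $H_\pm$ is the connected sum of two Hopf links, both positive ($H_+$) or both negative ($H_-$), with the orientations induced from Figure~\ref{fig:D-ij}(b). The mirror image of $H_+$ is $H_-$, and reduced Khovanov homology over $\mathbb{Z}_2$ satisfies $\mathrm{Kh}^h_q(\overline{L})\cong \mathrm{Kh}^{-h}_{-q}(L)$ (over a field, up to the usual universal coefficient duality). Hence it suffices to treat $H_+$ and then apply $(h,q)\mapsto(-h,-q)$.

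For $H_+$ we use two facts.

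\textbf{The positive Hopf link.} The reduced $\mathbb{Z}_2$ Khovanov homology of the positive Hopf link $H$ is $\mathbb{Z}_2$ in bigradings $(h,q)=(0,1)$ and $(2,5)$, and zero otherwise. We verify this directly from the two-crossing cube of resolutions, or from the known unreduced answer, which has generators in $(0,0),(0,2),(2,4),(2,6)$. Reducing shifts by $-1$ on each pair, giving $(0,1)$ and $(2,5)$. We should double check that this normalization matches the one used via \cite{Wa0}. One consistency check is that the Jones polynomial of $H$ is $-q^{1/2}-q^{5/2}$ in the appropriate variable, so the $\delta$-grading is $q-2h=1$, as it should be for a two-crossing alternating link.

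\textbf{The Künneth formula.} Over the field $\mathbb{Z}_2$, reduced Khovanov homology of a connected sum is the tensor product of the reduced homologies of the summands, with bigradings adding. This holds when the connected sum is taken at the marked point, and with $\mathbb{Z}_2$ coefficients the choice of component does not matter.

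Tensoring $\mathbb{Z}_2\{(0,1),(2,5)\}$ with itself gives generators in
\[
(0,1)+(0,1)=(0,2),\qquad (0,1)+(2,5)=(2,6)\ \text{(twice)},\qquad (2,5)+(2,5)=(4,10).
\]
This gives exactly the stated groups for $H_+$, including the $\mathbb{Z}_2^2$ in $(2,6)$. Mirroring gives $(0,-2)$, $(-2,-6)$ twice, and $(-4,-10)$ for $H_-$.

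The main point to check is orientations and sign conventions. The Hopf-link components in Figure~\ref{fig:D-ij}(b), with the orientation extended as described, must really give linking number $+1$ for each summand in $H_+$ (and $-1$ in $H_-$), since the reversed-orientation Hopf link would shift $h$ and $q$. We verify this by counting crossing signs in the figure: $H_+$ should have $n_-=0$ and four positive crossings. As an independent check, we can confirm that the Jones polynomial of $H_+$ equals $(q^{1/2}+q^{5/2})^2$ up to normalization, so that the graded Euler characteristic matches.
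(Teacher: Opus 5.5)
Your argument is correct, but it takes a different route from the paper. The paper gives no derivation: it simply reports a machine computation with KnotJob on the oriented diagrams of Figure~\ref{fig:D-ij}(b). You instead derive the groups by hand from three standard facts:
\begin{itemize}
\item the reduced $\mathbb{Z}_2$ homology of the Hopf link;
\item the K\"unneth formula $\widetilde{C}(L_1\#L_2)\cong\widetilde{C}(L_1)\otimes\widetilde{C}(L_2)$ for a connected sum taken at the marked point, which is legitimate because over $\mathbb{Z}_2$ the reduced theory does not depend on the marked component, so you may mark the middle component of the three-component chain;
\item mirror duality over a field.
\end{itemize}

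Your worry about normalization is settled by the paper itself. In the proof of Lemma~\ref{lem:8n+6} it states that the positive Hopf link $D^0_0$ has reduced homology in bidegrees $(0,1)$ and $(2,5)$. Throughout, the unknot is used with homology in $(0,0)$. That is exactly the normalization in which bigradings simply add under connected sum, so your tensor computation needs no extra shift.

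What your route buys is transparency. It explains why the middle group is $\mathbb{Z}_2^2$: it comes from the two cross terms. It also shows that $H_\pm$ is thin with $\delta=\pm2$ without any computer.

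What it costs is that the one diagram-dependent input must be read off the figure by hand. You need that, with the orientation inherited from $D^i_j$, both Hopf summands have linking number $+1$ when $i$ is even and $-1$ when $i$ is odd. The computer calculation sidesteps this by working directly with the oriented diagrams.

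The stated answer does force this sign pattern. A chain whose two summands had opposite signs would have homology $\mathbb{Z}_2^2$ in $(0,0)$ and $\mathbb{Z}_2$ in $(2,4)$ and $(-2,-4)$. So your identification is the only one consistent with the lemma.
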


\begin{proof}
We perform this calculation using KnotJob \cite{S}. 
\end{proof}

\begin{figure}[htbp]
\includegraphics*[width=0.8\textwidth]{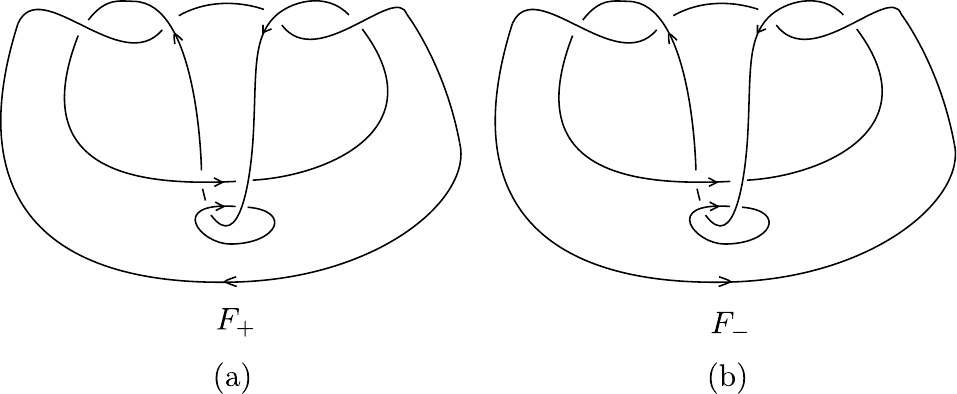}
\caption{
The diagrams $F_+$ and $F_-$.
}\label{fig:F}
\end{figure}

\begin{lemma}\label{lem:F}
The links $F_\pm$ shown in Figure \ref{fig:F} have Khovanov homology
\[
\mathrm{Kh}^h_q(F_\pm)\cong
\begin{cases}
\mathbb{Z}_2 & \text{if $(h,q)=(-2,-2), (-1,0), (5,12)$} \\
\mathbb{Z}_2^2 & \text{if $(h,q)=(1,4), (3,8), (4,10)$} \\
\mathbb{Z}_2^3 & \text{if $(h,q)=(0,2)$}\\
\mathbb{Z}_2^4 & \text{if $(h,q)=(2,6)$}\\
0 & \text{otherwise}.
\end{cases}
\]
\end{lemma}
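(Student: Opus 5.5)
The simplest route mirrors the proof of Lemma~\ref{lem:H2}: $F_\pm$ are fixed links, independent of $n$, so their reduced $\mathbb{Z}_2$ Khovanov homology is a finite computation. I would enter the diagrams of Figure~\ref{fig:F} with the orientations shown into KnotJob~\cite{S} and read off the table. Two sanity checks on the output are available. First, the total rank should be $1+1+1+2+2+2+3+4=16$. Second, every listed generator satisfies $q-2h=2$, so $F_\pm$ should come out Khovanov thin with $\delta=2$. That thinness is exactly why these links serve as the base case for the later inductive computation of $D^i_j$.

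For a hand argument, I would make thinness the target. If $F_\pm$ are quasi-alternating (or alternating after isotopy), then by Manolescu--Ozsváth the reduced Khovanov homology is determined by the Jones polynomial and the signature. It is supported on the single diagonal $\delta=-\sigma$ (in the paper's convention), and its Poincaré polynomial is obtained from $V(t)$ by the substitution $q^{\,}\mapsto$ appropriate monomials in $h,q$. The plan would be as follows.
\begin{enumerate}
\item Exhibit a quasi-alternating crossing in each $F_\pm$. This amounts to checking $\det(F_\pm)=\det(F_\pm^0)+\det(F_\pm^\infty)$ with both resolutions quasi-alternating, which I would try to reduce to connected sums of Hopf-type links such as $H_\pm$ or rational links.
\item Compute $\sigma(F_\pm)$ from a Goeritz matrix. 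I expect $\delta=2$ to match $-\sigma$, up to the paper's normalization of the reduced theory.
\item Compute the Jones polynomial by the skein relation, or via the Kauffman bracket, using the crossing numbers $n_\pm$ of the given orientation. Its coefficients, up to sign, should reproduce the ranks $1,1,3,2,4,2,2,1$ in $h=-2,\dots,5$.
\end{enumerate}

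An alternative hand argument avoids quasi-alternating theory. Resolve one crossing of $F_\pm$ so that one resolution is a connected sum of Hopf links, whose homology is known from Lemma~\ref{lem:H2}, and the other is a simple alternating link. Then apply the exact sequences \eqref{eq:long-positive} or \eqref{eq:long-negative} with the shift $c$ computed as in Lemma~\ref{lem:c}. This argument closes because both ends are thin on adjacent diagonals, so all connecting maps vanish for grading reasons, except possibly one. That one is forced by the determinant count $16$.

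The main obstacle is identifying a crossing with clean resolutions and tracking the orientation-dependent shift $c$ correctly. I would cross-check the final table against the computer output.
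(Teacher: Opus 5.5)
Your primary route is the paper's route: $F_\pm$ are fixed links independent of $n$, and the paper proves the lemma by a direct KnotJob computation. Your sanity checks are also accurate: total rank $16$, with every generator on $\delta=q-2h=2$. One small correction: $F_\pm$ is not the base case of the later induction (that is the Hopf link $D^0_0$) but the side term in each skein step, and what is used there is the support of $\mathrm{Kh}(F_\pm)$ (nothing outside $-2\le q\le 12$, and only $\mathrm{Kh}^5_{12}\cong\mathbb{Z}_2$ at $q=12$).
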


\begin{proof}
We perform the calculation with KnotJob~\cite{S}.
\end{proof}

\begin{lemma}\label{lem:8n+6}
$
\mathrm{Kh}^{-4}_{-9}(D^{-(2n+3)}_{n+1})\cong \mathrm{Kh}^{2n+4}_{4n+5}(D^{-(2n+3)}_{n+1})\cong\mathbb{Z}_2.
$
\end{lemma}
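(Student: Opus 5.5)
We induct on $|i|$ along the skein sequence set up just before Lemma~\ref{lem:c}, with $j=n+1$ fixed. We resolve the topmost positive crossing of the box labelled $i$ in $D^i_j$. The horizontal resolution is $H_\pm$ (Lemma~\ref{lem:H2}), and the vertical resolution is $D^{i-1}_j$. So we feed Viro's long exact sequence~(\ref{eq:long-positive}) with $D=D^i_j$, $D_-=D^{i-1}_j$ and the $H_\pm$ term in place of $D_\infty$. The shift constant $c$ is computed from Lemma~\ref{lem:c} together with the negative crossing count of $H_\pm$ in its induced orientation. Since the parity of $i$ alternates, $c$ and the relevant $H_\pm$ alternate in a two-periodic pattern, up to a term depending on $j$. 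The recursion runs downward, from $D^{-1}_j$ or $D^{-2}_j$ to $D^{-(2n+3)}_j$. Reading the sequence the other way expresses $\mathrm{Kh}(D^{i-1}_j)$ in a bidegree in terms of $\mathrm{Kh}(D^i_j)$ and $\mathrm{Kh}(H_\pm)$. Because $H_\pm$ is supported in only five bidegrees, the maps are isomorphisms away from finitely many bidegrees.

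The base case is a small member of the family, presumably $D^{-1}_{n+1}$ or $D^{-2}_{n+1}$. Removing the $i$-box leaves a diagram whose remaining crossings sit only in the $j$-box. I expect the base diagram to be related to the links $F_\pm$ of Figure~\ref{fig:F}, whose homology is given by Lemma~\ref{lem:F}. If the $j$-box is still present, I would run a second induction on $j$ of the same shape, resolving crossings in the $j$-box. One resolution lowers $j$, and the other should again give a fixed small link, a Hopf-type or $F_\pm$-type link. This reduces to $F_\pm$ at $j=0$ or $j=1$.

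Concretely, I would track the two target bidegrees backward through the chain. For each step $D^{i}_j\to D^{i-1}_j$, I would write the target $(h,q)$ for $D^{i-1}_j$, shifted by the $q$-shift in~(\ref{eq:long-positive}). I would then check whether either flanking $H_\pm$ term, in bidegrees $(h-c-1,q-3c-2)$ and $(h-c,q-3c-2)$ with $c$ recomputed each step, lands on one of $(\pm4,\pm10)$, $(\pm2,\pm6)$, $(0,\pm2)$. The gradings grow linearly in $n$: for $(2n+4,4n+5)$ the homological degree increases by one per step, while $c$ changes by $\pm4$ alternately. I expect the flanking terms to vanish at all but possibly the first one or two steps. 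Then $\mathrm{Kh}^{2n+4}_{4n+5}(D^{-(2n+3)}_{n+1})$ is isomorphic to a group in small bidegree of the base link, read off from Lemma~\ref{lem:F} as $\mathbb Z_2$, for instance its $(5,12)$ or $(-2,-2)$ generator. The same holds for $(-4,-9)$, which should come from the opposite extreme generator.

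The main obstacle is the step where a flanking $H_\pm$ term is nonzero. There the sequence alone gives only bounds, and I would need to show that the connecting map is zero, or an isomorphism, to pin down rank one. My first attempt would be to choose the resolution order, starting from the $j$-box or from the $i$-box, so that the target bidegree never collides with the support of $H_\pm$. This works if the exceptional bidegrees all lie strictly between the extremal targets $h=-4$ and $h=2n+4$, which are the extreme homological degrees. If a collision is unavoidable, I would bound the group by exactness and use the $\delta$-grading or parity of $q$ on two-component links to kill a term, using that $q$ is odd here.
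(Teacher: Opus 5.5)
There is a genuine gap. Your proposal is mostly a plan whose key checks are left as expectations, and the concrete choices it does make are wrong.

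\textbf{The skein sequence is set up with the two resolutions swapped.} As stated before Lemma~\ref{lem:c}, the horizontal resolution $H_\pm$ carries the induced orientation, and the vertical resolution $D^{i-1}_j$ does not. So in (\ref{eq:long-positive}) the link $H_\pm$ is the term shifted only by $q\mapsto q-1$. The diagram $D^{i-1}_j$ is the term shifted by $c=n_-(D^{i-1}_j)-n_-(D^i_j)$. By Lemma~\ref{lem:c} alone this is $-4$ for $i$ odd and $+4$ for $i$ even, independent of $j$; this gives the sequence (\ref{eq:long}). You instead place $D^{i-1}_j$ at $(h,q-1)$ and compute $c$ from $n_-(H_\pm)$ in its induced orientation. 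That is inconsistent, since the $c$-shifted term is by definition the one whose orientation is not induced. With that assignment every bidegree you track would land in the wrong place.

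\textbf{The mechanism producing $\mathbb Z_2$ is not the one you expect, and your fallback points the wrong way.} Take $(h,q)=(-4,-9)$. At the very first resolution, $\mathrm{Kh}^{-4}_{-10}(H_-)\cong\mathbb Z_2$ while $\mathrm{Kh}^{-5}_{-10}(H_-)=0$, and this $H_-$ class \emph{is} the answer.
\begin{itemize}
\item What remains is to show that both neighbouring groups $\mathrm{Kh}^{-1}_1$ and $\mathrm{Kh}^0_1$ of $D^{-(2n+2)}_{n+1}$ vanish.
\item The paper does this by chasing that pair of bidegrees through the rest of the $i$-box, where all $H_\pm$ terms are zero.
\item It then continues through the $j$-box of $D^0_{n+1}$, where the side resolution $F_\pm$ is zero in quantum degree $q=-4n-2$.
\item The chase ends at the Hopf link $D^0_0$, which is supported only in $(0,1)$ and $(2,5)$.
\end{itemize}
So this class is not an extreme generator of $F_\pm$ or of any base link.

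For $(2n+4,4n+5)$ all $H_\pm$ terms do vanish, and the chase reaches $\mathrm{Kh}^5_{11}(D^0_{n+1})$. There the class comes from $\mathrm{Kh}^5_{12}(F_\pm)\cong\mathbb Z_2$ at the first step of the $j$-induction. Again one must also show $\mathrm{Kh}^5_{13}(D^0_n)=\mathrm{Kh}^6_{13}(D^0_n)=0$, by a chase down to $D^0_0$. Note that the base of the $j$-induction is $D^0_0$, not $F_\pm$; $F_\pm$ is the fixed side link.

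Consequently your first strategy, reordering resolutions so that no collision occurs, cannot work. A collision-free chase ends at $D^0_0$, which vanishes in the relevant bidegrees, so it could only ever give $0$. Exactness bounds or the parity of $q$ do not replace the actual content of the lemma. That content is the vanishing of the two groups adjacent to the single nonzero side term, $H_-$ in the first case and $F_\pm$ in the second.
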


\begin{proof}
By resolving the topmost crossing in the upper box of $D^{-(2n+3)}_{n+1}$,
we have the long exact sequence
\begin{equation}\label{eq:long}
\begin{split}
\cdots &\to \mathrm{Kh}^{h-1}_{q-1}(H_-)\to \mathrm{Kh}^{h-c-1}_{q-3c-2}(D^{-(2n+2)}_{n+1}) \to
\mathrm{Kh}^h_q(D^{-(2n+3)}_{n+1})\\
& \to
\mathrm{Kh}^{h}_{q-1} (H_-) \to\mathrm{Kh}^{h-c}_{q-3c-2}(D^{-(2n+2)}_{n+1}) \to
\cdots.
\end{split}
\end{equation}
First, set $(h,q)=(-4,-9)$.
By Lemma \ref{lem:H2}, $\mathrm{Kh}^{-5}_{-10}(H_-)\cong0$ and $\mathrm{Kh}^{-4}_{-10}(H_-)\cong\mathbb{Z}_2$.
Since $c=-4$ by Lemma \ref{lem:c}, we have
\[
\mathrm{Kh}^{-1}_{1}(D^{-(2n+2)}_{n+1}) \to
\mathrm{Kh}^{-4}_{-9}(D^{-(2n+3)}_{n+1}) \to \mathbb{Z}_2 \to \mathrm{Kh}_{1}^{0}(D^{-(2n+2)}_{n+1}).
\]
Hence, it suffices to show that $\mathrm{Kh}^{-1}_{1}(D^{-(2n+2)}_{n+1})\cong\mathrm{Kh}_{1}^{0}(D^{-(2n+2)}_{n+1})\cong0$.
For the diagram $D^{-(2n+2)}_{n+1}$, we apply the resolution at the topmost crossing as above.
Note that $c=4$ by Lemma \ref{lem:c}.
Then the long exact sequence reduces to
\[
\mathrm{Kh}^{-6}_{-13}(D^{-(2n+1)}_{n+1})
\to\mathrm{Kh}^{-1}_{1}(D^{-(2n+2)}_{n+1})\to \mathrm{Kh}^{-1}_{0}(H_+)\cong0,
\]
and\[
\mathrm{Kh}^{-5}_{-13}(D^{-(2n+1)}_{n+1})
\to\mathrm{Kh}^{0}_{1}(D^{-(2n+2)}_{n+1})\to \mathrm{Kh}^{0}_{0}(H_+)\cong0.
\]
Thus, we need to know that
$\mathrm{Kh}^{-6}_{-13}(D^{-(2n+1)}_{n+1})\cong\mathrm{Kh}^{-5}_{-13}(D^{-(2n+1)}_{n+1})\cong0$.
By resolving the topmost crossing in the upper box of $D^{-(2n+1)}_{n+1}$, we obtain
\[
0\cong\mathrm{Kh} ^{h-1}_{-14}(H_-)\to  \mathrm{Kh}^{h+3}_{-3}(D^{-2n}_{n+1})\to \mathrm{Kh}^{h}_{-13}(D^{-(2n+1)}_{n+1}) \to
\mathrm{Kh}^h_{-14}(H_-)\cong0,
\]
where $h\in \{-6,-5\}$.  Thus  $\mathrm{Kh}^{h+3}_{-3}(D^{-2n}_{n+1})\cong \mathrm{Kh}^{h}_{-13}(D^{-(2n+1)}_{n+1})$.
Then
$\mathrm{Kh}^{-3}_{-3}(D^{-2n}_{n+1})\cong\mathrm{Kh}^{-2}_{-3}(D^{-2n}_{n+1})\cong0$ implies 
$\mathrm{Kh}^{-1}_{1}(D^{-(2n+2)}_{n+1})\cong\mathrm{Kh}_{1}^{0}(D^{-(2n+2)}_{n+1})\cong0$.
Inductively,
it is enough to see that 
\[
\mathrm{Kh}^{-2n-3}_{-4n-3}(D^0_{n+1})\cong\mathrm{Kh}^{-2n-2}_{-4n-3}(D^0_{n+1})\cong0.
\]
For $D^0_{n+1}$, we resolve the topmost negative crossing in the lower box.
Then
\[
\mathrm{Kh}_{-4n-2}^h(F_\pm) \to \mathrm{Kh}^h_{-4n-3}(D^0_{n+1}) \to \mathrm{Kh}^{h+1}_{-4n-1}(D^0_n) \to
\mathrm{Kh}_{-4n-2}^{h+1}(F_\pm)
\] 
for $h \in \{-2n-3,-2n-2\}$.
The sign of $F_\pm$ depends on the parity of $n$.
If $n$ is even (resp.\ odd), then 
the resulting diagram is $F_+$ (resp.\ $F_-$).
By Lemma \ref{lem:F},
$\mathrm{Kh}_{-4n-2}^h(F_\pm) \cong\mathrm{Kh}_{-4n-2}^{h+1}(F_\pm)\cong0$.
Hence, 
$\mathrm{Kh}^h_{-4n-3}(D^0_{n+1}) \cong \mathrm{Kh}^{h+1}_{-4n-1}(D^0_n)$ for $h\in
\{-2n-3,-2n-2\}$.
Inductively, it is enough to see that
\[
\mathrm{Kh}^{-n-2}_{-2n-1}(D^0_0)\cong\mathrm{Kh}^{-n-1}_{-2n-1}(D^0_0)\cong0.
\]
However, $D_0^0$ is the Hopf link with linking number one.
It has
$\mathrm{Kh}^h_q(D^0_0)\cong\mathbb{Z}_2$
 for $(h,q)=(0,1), (2,5)$, and $0$ otherwise.
This finishes the computation of $\mathrm{Kh}^{-4}_{-9}(D^{-(2n+3)}_{n+1})$

Next, we set $(h,q)=(2n+4,4n+5)$.
Then (\ref{eq:long}) reduces to
\[
\mathrm{Kh}^{2n+3}_{4n+4}(H_-)\to \mathrm{Kh}^{2n+7}_{4n+15}(D^{-(2n+2)}_{n+1}) \to
\mathrm{Kh}^{2n+4}_{4n+5}(D^{-(2n+3)}_{n+1})\to
\mathrm{Kh}^{2n+4}_{4n+4} (H_-).
\]
By Lemma \ref{lem:H2}, $\mathrm{Kh}^{2n+3}_{4n+4}(H_-)\cong\mathrm{Kh}^{2n+4}_{4n+4}(H_-)\cong0$.
Thus 
\[
\mathrm{Kh}^{2n+7}_{4n+15}(D^{-(2n+2)}_{n+1}) \cong
\mathrm{Kh}^{2n+4}_{4n+5}(D^{-(2n+3)}_{n+1}).
\]
For $D_{n+1}^{-(2n+2)}$, resolve the topmost crossing in the upper box as before.
Then
\[
\mathrm{Kh}^{2n+6}_{4n+14}(H_+)\to \mathrm{Kh}^{2n+2}_{4n+1}(D^{-(2n+1)}_{n+1})\to
\mathrm{Kh}^{2n+7}_{4n+15}(D^{-(2n+2)}_{n+1}) \to \mathrm{Kh}^{2n+7}_{4n+14}(H_+).
\]
By Lemma \ref{lem:H2} again,
$\mathrm{Kh}^{2n+6}_{4n+14}(H_+)\cong
\mathrm{Kh}^{2n+7}_{4n+14}(H_+)\cong0$.
So, we have 
\[
\mathrm{Kh}^{2n+2}_{4n+1}(D^{-(2n+1)}_{n+1})\cong
\mathrm{Kh}^{2n+7}_{4n+15}(D^{-(2n+2)}_{n+1}).
\]
Inductively, we need to know that $\mathrm{Kh}^{2}_1(D^{-1}_{n+1})\cong\mathbb{Z}_2$.
Resolving the remaining crossing in the upper twist, we have
\begin{equation}\label{eq:D1n+1}
0\cong\mathrm{Kh}^{1}_{0}(H_-) \to \mathrm{Kh}^{5}_{11}(D^0_{n+1})
\to \mathrm{Kh}^2_1(D^{-1}_{n+1}) \to \mathrm{Kh}^2_{0}(H_-)\cong0.
\end{equation}
For $D^0_{n+1}$, we resolve the topmost crossing in the lower box as above.
Then
\begin{equation}\label{eq:n}
\mathrm{Kh}^{5}_{13}(D_n^0)\to
\underset{\cong\mathbb{Z}_2}{\mathrm{Kh}_{12}^{5}(F_\pm)} \to \mathrm{Kh}^{5}_{11}(D^0_{n+1}) \to \mathrm{Kh}^{6}_{13}(D^0_n) \to
\mathrm{Kh}_{12}^{6}(F_\pm)\cong0.
\end{equation}
We calculate $\mathrm{Kh}^h_{13}(D^0_{n})$ for $h\in \{5,6\}$.
By resolving the topmost crossing in the lower box,
\begin{equation}\label{eq:D0n}
0\cong\mathrm{Kh}^h_{14}(F_\pm)\to \mathrm{Kh}^h_{13}(D^0_{n}) \to \mathrm{Kh}^{h+1}_{15}(D^0_{n-1}) \to
\mathrm{Kh}^{h+1}_{14}(F_\pm)\cong0.
\end{equation}
Thus $\mathrm{Kh}^h_{13}(D^0_n) \cong \mathrm{Kh}^{h+1}_{15}(D^0_{n-1})$.
Inductively, we have 
\[
\mathrm{Kh}^{h}_{13}(D^0_n)\cong \mathrm{Kh}^{h+n}_{13+2n}(D^0_0)
\]
for $h\in \{5,6\}$.
As mentioned above, $\mathrm{Kh}^{h+n}_{13+2n}(D^0_0)\cong0$.
Thus (\ref{eq:n}) reduces to
\[
0 \to \mathbb{Z}_2 \to \mathrm{Kh}^5_{11}(D^0_{n+1}) \to 0,
\]
so  $\mathrm{Kh}^5_{11}(D^0_{n+1})\cong\mathbb{Z}_2$ as desired.
\end{proof}

\section{The knot $T_n(8n+5)$}

\begin{lemma}\label{lem:8n+5}
$\mathrm{Kh}^{-3}_{-10}(T_n(8n+5))\cong0$.
\end{lemma}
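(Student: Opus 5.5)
We would mimic the strategy of Lemma~\ref{lem:8n+6}. We first choose a diagram for the knot $T_n(8n+5)$ in the same spirit as $D^i_j$. Merging the $(8n+5)$-tangle with the box of $-(8n+6)$ horizontal half twists leaves a single negative crossing in place of that box. A natural way to organize this is to resolve that extra crossing. Its two resolutions are $T_n(8n+6)$, represented by $D^{-(2n+3)}_{n+1}$, and $D_\infty$, which is the unknot, as in the proof of Lemma~\ref{lem:two-lines}.

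The relevant exact sequence is (\ref{eq:long-negative}) with $N=8n+5$, where $c=N-8n-8=-3$. At $(h,q)=(-3,-10)$ it reads
\[
\mathrm{Kh}^{-1}_{-2}(D_\infty)\to \mathrm{Kh}^{-3}_{-9}(T_n(8n+6))\to \mathrm{Kh}^{-3}_{-10}(T_n(8n+5))\to \mathrm{Kh}^{0}_{-2}(D_\infty).
\]
Both outer terms vanish, since the reduced homology of the unknot is supported at $(0,0)$. The grading shift must be double-checked against the convention used for $D_\infty$, but if it is right, the lemma reduces to showing
\[
\mathrm{Kh}^{-3}_{-9}(D^{-(2n+3)}_{n+1})\cong 0.
\]

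To prove this vanishing we run exactly the induction of the first half of Lemma~\ref{lem:8n+6}, with $h$ shifted by one. We resolve the topmost crossing of the upper box using (\ref{eq:long}) with $c=-4$. The $H_-$ terms appear at $\mathrm{Kh}^{-4}_{-10}(H_-)\cong\mathbb{Z}_2$ and $\mathrm{Kh}^{-3}_{-10}(H_-)\cong 0$ by Lemma~\ref{lem:H2}. So we need the map $\mathrm{Kh}^{-4}_{-10}(H_-)\to \mathrm{Kh}^{0}_{1}(D^{-(2n+2)}_{n+1})$ to be surjective, together with $\mathrm{Kh}^{1}_{1}(D^{-(2n+2)}_{n+1})$ being controlled. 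The proof of Lemma~\ref{lem:8n+6} already showed $\mathrm{Kh}^0_1(D^{-(2n+2)}_{n+1})\cong 0$. It therefore remains to show $\mathrm{Kh}^{0}_{1}(D^{-(2n+2)}_{n+1})$ and $\mathrm{Kh}^{1}_{1}(D^{-(2n+2)}_{n+1})$ vanish. We do this by descending through $D^{-(2n+1)}_{n+1},\dots,D^0_{n+1}$, alternating the shifts $c=\pm4$ and checking at each step with Lemma~\ref{lem:H2} that the $H_\pm$ groups at quantum gradings $-14$ and $0$ vanish in the relevant homological degrees. The chain would end at $D^0_{n+1}$ in degrees $h\in\{-2n-2,-2n-1\}$ and $q=-4n-3$.

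Then we resolve the lower box, using Lemma~\ref{lem:F}, whose groups in quantum grading $-4n-2\le -6$ are all zero. This reduces everything to the Hopf link $D^0_0$ in gradings $h\in\{-n-1,-n\}$, $q=-2n-1$. That group is zero because $D^0_0$ is supported at $(0,1)$ and $(2,5)$.

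The main obstacle is the bookkeeping. We must verify that the shifted homological window never collides with the nonzero entries of $H_\pm$ at $(0,\pm2)$ and $(\pm2,\pm6)$. For $n=1$ the degrees at the bottom, $(h,q)=(-2,-3)$ and $(-1,-3)$, must be checked against both $F_\pm$ and the Hopf link directly.

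If the shifts turn out misaligned, so that a nonzero $H_\pm$ term enters, we would instead resolve a crossing of the $N$-tangle side of a different diagram of the knot. Alternatively, we would compute $T_n(8n+5)$ directly as the closure of a simpler diagram analogous to $D^i_j$ and repeat the induction there.
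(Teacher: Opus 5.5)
Your proposal is correct and is essentially the paper's proof. Resolving the single crossing of the merged twist region (with $c=-3$) gives $\mathrm{Kh}^{-3}_{-10}(T_n(8n+5))\cong\mathrm{Kh}^{-3}_{-9}(D^{-(2n+3)}_{n+1})$, and your grading shifts are the right ones; then (\ref{eq:long}) at $(h,q)=(-3,-9)$ with $c=-4$ makes this group a quotient of $\mathrm{Kh}^0_1(D^{-(2n+2)}_{n+1})$, which was already shown to vanish in the proof of Lemma~\ref{lem:8n+6}.

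Your extra condition on $\mathrm{Kh}^1_1(D^{-(2n+2)}_{n+1})$, and the second descent to the Hopf link that it requires, are unnecessary: $\mathrm{Kh}^{-3}_{-10}(H_-)\cong 0$ already closes the sequence on the right. That descent does go through, though.
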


\begin{proof}
Resolve the negative crossing marked with $\ast$ in Figure \ref{fig:8n+5}(a).
Then the long exact sequence gives
\[
0\cong\mathrm{Kh}^{-1}_{-2}(O) \to \mathrm{Kh}^{-3}_{-9}(T_n(8n+6)) \to \mathrm{Kh}^{-3}_{-10}(T_n(8n+5)) \to
\mathrm{Kh}^{0}_{-2}(O) \cong0
\]
where $O$ denotes the unknot.
\begin{figure}[htbp]
\includegraphics*[width=0.9\textwidth]{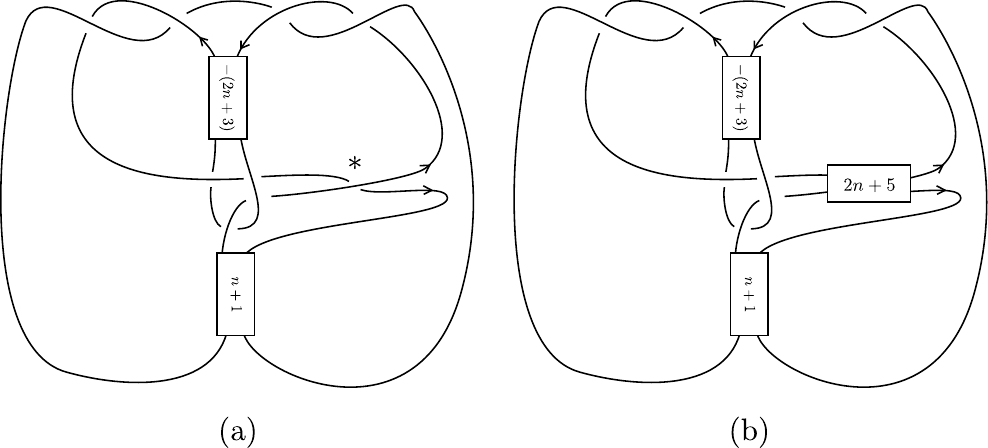}
\caption{(a) The knot $T_n(8n+5)$.   
Resolving the crossing marked with $\ast$ yields
the unknot and $T_n(8n+6)$.  (b) The knot $T_n(10n+11)$.
The box labeled with $2n+5$ contains $2n+5$ horizontal right-handed half twists.
}\label{fig:8n+5}
\end{figure}
Hence we need to know that $\mathrm{Kh}^{-3}_{-9}(T_n(8n+6))\cong0$.
With the notation from the previous section, this is $\mathrm{Kh}^{-3}_{-9}(D^{-(2n+3)}_{n+1})$.
In (\ref{eq:long}), set $(h,q)=(-3,-9)$.
By Lemma \ref{lem:H2}, we have
\[
 \mathrm{Kh}^0_{1}(D_{n+1}^{-(2n+2)})\to \mathrm{Kh}^{-3}_{-9}(D^{-(2n+3)}_{n+1})
\to 0.
\]
In the proof of Lemma \ref{lem:8n+6},
we showed that $\mathrm{Kh}^0_{1}(D^{-(2n+2)}_{n+1})\cong0$.
Thus  we have $\mathrm{Kh}^{-3}_{-9}(D^{-(2n+3)}_{n+1})\cong0$.
\end{proof}

\section{The knot $T_n(10n+11)$}

\begin{lemma}\label{lem:10n+11}
$\mathrm{Kh}^{2n+3}_{6n+10}(T_n(10n+11))\cong0$.
\end{lemma}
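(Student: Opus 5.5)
We follow the pattern of Lemma \ref{lem:8n+5}: use the diagram of $T_n(10n+11)$ in Figure \ref{fig:8n+5}(b) and resolve crossings in the box of $2n+5$ right-handed horizontal half twists one at a time. We reduce, via Viro's long exact sequence, to groups of diagrams already controlled in Section 4, such as $D^{i}_{j}$, $H_\pm$ and $F_\pm$. Resolving a positive crossing in that box gives, in one resolution, the diagram with one fewer twist, and in the other a simpler link. In this twist family the simpler link is presumably a fixed one, since the twist region collapses. In the sequence (\ref{eq:long-positive}),
\[
\mathrm{Kh}^{h-c-1}_{q-3c-2}(D_\infty)\to \mathrm{Kh}^{2n+3}_{6n+10}(D)\to \mathrm{Kh}^{2n+3}_{6n+9}(D_-)\to\cdots,
\]
we would check that the $D_\infty$ term vanishes at the shifted bigrading. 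That term is either an unknot or a link computed by KnotJob, like Lemmas \ref{lem:H2} and \ref{lem:F}. We then peel off twists inductively, with the bigrading shifting by $(0,-1)$ at each step.

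The peeling should reach $T_n(8n+6)=D^{-(2n+3)}_{n+1}$, whose groups were analysed in Lemma \ref{lem:8n+6}. Concretely, the argument of Lemma \ref{lem:two-lines} for (3.2) already shows that for $8n+7\le N\le 10n+11$ the relevant $D_\infty$ terms are unknots. These vanish unless $N=10n+12$ at the bigrading $(2n+4,N-4n-1)$. Running the same bookkeeping at homological degree $2n+3$, with $q=N-4n-1$ so that $q=6n+10$ at $N=10n+11$, the unknot terms sit at $h-c-1$ and $h-c$ with $c=N-8n-9$. This gives $h-c=10n+12-N$ and $q-3c-2=20n+24-2N$. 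These vanish except where $(h-c, q-3c-2)=(0,0)$ or $(h-c-1,q-3c-2)=(0,0)$, i.e.\ $N=10n+12$ or $N=10n+11$.

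The case $N=10n+11$ is exactly the obstacle. Here the unknot contributes $\mathrm{Kh}^0_0(O)\cong\mathbb{Z}_2$ in the map $\mathrm{Kh}^{2n+3}_{6n+9}(D_-)\to \mathrm{Kh}^0_0(O)$. So we need $\mathrm{Kh}^{2n+3}_{6n+10}(T_n(10n+11))$ to vanish using the connecting map, or else we must compute $\mathrm{Kh}^{2n+3}_{6n+9}(T_n(10n+10))$ and $\mathrm{Kh}^{2n+2}_{0}$-type terms. To avoid analysing the connecting map, I would instead use the diagram of Figure \ref{fig:8n+5}(b) directly. I would resolve a crossing outside the twist box, e.g.\ one in the upper vertical box, as in the $D^i_j$ induction. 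This gives an induction in which one resolution is a connected sum of Hopf links or $F_\pm$, with known homology, and the other has a shorter twist. The steps are: (1) compute $n_-$ for each intermediate diagram to get the shifts $c$; (2) verify from Lemmas \ref{lem:H2} and \ref{lem:F} that the side terms vanish in the needed bigradings; (3) reduce to a Hopf link or $D^0_0$ at a bigrading where its homology is zero. The main difficulty is step (2) at the final stage, where the target degree $h=2n+3$ is adjacent to the nonzero group $\mathrm{Kh}^{2n+4}_{4n+5}(T_n(8n+6))\cong\mathbb{Z}_2$. Some care, possibly a rank count using exactness as in (\ref{eq:n}), will be needed to show the surviving group is zero rather than $\mathbb{Z}_2$.
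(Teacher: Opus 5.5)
Your first route is the paper's route, but you abandon it because of an arithmetic slip. With $h=2n+3$, $q=N-4n-1$ and $c=N-8n-9$, the unknot terms in (\ref{eq:long-positive}) sit at $(h-c-1,q-3c-2)=(10n+11-N,\,20n+24-2N)$ and $(h-c,q-3c-2)=(10n+12-N,\,20n+24-2N)$. At $N=10n+11$ these are $\mathrm{Kh}^{0}_{2}(O)$ and $\mathrm{Kh}^{1}_{2}(O)$, and both are zero. You checked only the homological coordinate: $(h-c-1,q-3c-2)=(0,0)$ would force $N=10n+11$ and $N=10n+12$ at once. Also, the map out of $\mathrm{Kh}(D_-)$ lands in degree $h-c=1$, not $0$. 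So no unknot term survives for $8n+7\le N\le 10n+11$. The $2n+5$ resolutions give $\mathrm{Kh}^{2n+3}_{6n+10}(T_n(10n+11))\cong\mathrm{Kh}^{2n+3}_{4n+5}(T_n(8n+6))$ with no connecting map to analyse, which is exactly how the paper's proof begins.

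The genuine gap is what follows. Your replacement, resolving crossings in the vertical boxes of $T_n(10n+11)$ directly, rests on the unsupported claim that the side resolutions are $H_\pm$ or $F_\pm$. Lemmas \ref{lem:H2} and \ref{lem:F} concern resolutions of $D^i_j$, which has no horizontal twist box. Your diagrams still contain the $2n+5$ box, and you have no computation for them.

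More importantly, the real content, $\mathrm{Kh}^{2n+3}_{4n+5}(D^{-(2n+3)}_{n+1})=0$, is never proved; you leave it to ``some care''. The paper gets it by rerunning the second half of Lemma \ref{lem:8n+6} one homological degree lower:
\begin{itemize}
\item Each pair of resolutions in the upper box shifts $(h,q)$ by $(-2,-4)$.
\item All $H_\pm$ terms vanish. Before the last step they sit in degree $\ge 2$ for $H_-$ and $\ge 5$ for $H_+$. At the last step they are $\mathrm{Kh}^0_0(H_-)$ and $\mathrm{Kh}^1_0(H_-)$, both zero.
\item This gives $\mathrm{Kh}^{1}_{1}(D^{-1}_{n+1})\cong\mathrm{Kh}^{4}_{11}(D^0_{n+1})$.
\item Then the sequence $0=\mathrm{Kh}^4_{12}(F_\pm)\to\mathrm{Kh}^4_{11}(D^0_{n+1})\to\mathrm{Kh}^5_{13}(D^0_n)=0$ finishes the argument. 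The last vanishing comes from (\ref{eq:D0n}), reduced down to $D^0_0$.
\end{itemize}
The printed proof writes this endpoint as $\mathrm{Kh}^{-1}_{1}(D^{-1}_{n+1})$, but the shifts give $\mathrm{Kh}^{1}_{1}$. The nonzero neighbour you feared does appear, as $\mathrm{Kh}^5_{12}(F_\pm)\cong\mathbb{Z}_2$. It is harmless because the preceding term $\mathrm{Kh}^5_{13}(D^0_n)$ is zero.
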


\begin{proof}
Resolve the leftmost crossing in the box labeled with $2n+5$.
Since $c=2n+2$, we have the long exact sequence
\[
0\cong \mathrm{Kh}^{0}_{2}(O)\to \mathrm{Kh}^{2n+3}_{6n+10}(T_n(10n+11)) \to \mathrm{Kh}^{2n+3}_{6n+9}(T_n(10n+10)) \to
\mathrm{Kh}^{1}_{2}(O)\cong0,
\]
where $O$ denotes the unknot again.
Thus $\mathrm{Kh}^{2n+3}_{6n+10}(T_n(10n+11)) \cong \mathrm{Kh}^{2n+3}_{6n+9}(T_n(10n+10))$.
Inductively,
it is enough to show that $\mathrm{Kh}^{2n+3}_{4n+5} (T_n(8n+6))\cong0$.
Recall that $T_n(8n+6)$ is represented by the diagram $D^{-(2n+3)}_{n+1}$.

Set $(h,q)=(2n+3,4n+5)$ for (\ref{eq:long}).  Then
\[
0\cong\mathrm{Kh}^{2n+2}_{4n+4}(H_-)\to \mathrm{Kh}^{2n+6}_{4n+15}(D^{-(2n+2)}_{n+1}) \to
\mathrm{Kh}^{2n+3}_{4n+5}(D^{-(2n+3)}_{n+1})\to
\mathrm{Kh}^{2n+3}_{4n+4} (H_-)\cong0.
\]
Thus
$\mathrm{Kh}^{2n+6}_{4n+15}(D^{-(2n+2)}_{n+1}) \cong
\mathrm{Kh}^{2n+3}_{4n+5}(D^{-(2n+3)}_{n+1})$.
For $D_{n+1}^{-(2n+2)}$,
we have
\[
0\cong\mathrm{Kh}^{2n+5}_{4n+14}(H_+)\to \mathrm{Kh}^{2n+1}_{4n+1}(D^{-(2n+1)}_{n+1})\to
\mathrm{Kh}^{2n+6}_{4n+15}(D^{-(2n+2)}_{n+1}) \to \mathrm{Kh}^{2n+6}_{4n+14}(H_+)\cong0
\]
as in the proof of Lemma \ref{lem:8n+6}.
Hence 
$\mathrm{Kh}^{2n+1}_{4n+1}(D^{-(2n+1)}_{n+1})\cong
\mathrm{Kh}^{2n+6}_{4n+15}(D^{-(2n+2)}_{n+1})$.
Inductively,
it is enough to show that
$\mathrm{Kh}^{-1}_{1}(D^{-1}_{n+1})\cong0$.
As in (\ref{eq:D1n+1}),
\[
0\cong\mathrm{Kh}^{-2}_{0}(H_-) \to \mathrm{Kh}^{2}_{11}(D^0_{n+1})
\to \mathrm{Kh}^{-1}_1(D^{-1}_{n+1}) \to \mathrm{Kh}^{-1}_{0}(H_-)\cong0.
\]
For $D^0_{n+1}$,
as in (\ref{eq:n}), we have
\[
0\cong\mathrm{Kh}_{12}^{2}(F_\pm) \to \mathrm{Kh}^{2}_{11}(D^0_{n+1}) \to \mathrm{Kh}^{3}_{13}(D^0_n) \to
\mathrm{Kh}_{12}^{3}(F_\pm)\cong0.
\]
Thus, it suffices to show that $\mathrm{Kh}^3_{13}(D^0_{n})\cong0$.
As in (\ref{eq:D0n}),
\[
0\cong\mathrm{Kh}^{3}_{14}(F_\pm)\to \mathrm{Kh}^{3}_{13}(D^0_{n}) \to \mathrm{Kh}^{4}_{15}(D^0_{n-1}) \to
\mathrm{Kh}^{4}_{14}(F_\pm)\cong0.
\]
Hence 
$\mathrm{Kh}^{3}_{13}(D^0_{n}) \cong \mathrm{Kh}^{4}_{15}(D^0_{n-1})$.
Inductively, this is isomorphic to $\mathrm{Kh}^{n+3}_{2n+13}(D^0_0)$.
Recall that $D^0_0$ is the Hopf link with linking number one.
It has
$\mathrm{Kh}^h_q(D^0_0)\cong\mathbb{Z}_2$
 for $(h,q)=(0,1), (2,5)$, and $0$ otherwise.
Hence $\mathrm{Kh}^{n+3}_{2n+13}(D^0_0)\cong0$.
\end{proof}

\section{Thin slopes}

In the remainder of this article, we show that for each $n\geq1$, the knot $K_n$ has no Khovanov thin surgery, thereby proving Theorem~\ref{thm:no_thin}. Since $K_n$ is hyperbolic, Thurston's hyperbolic Dehn surgery theorem~\cite{Thurston} implies that, for all sufficiently long slopes $r$, the filled manifold $K_n(r)$ is hyperbolic and its symmetry group injects into the symmetry group of $K_n$ (see for example~\cite[Corollary~2.5]{BKM2}). As the symmetry group of $K_n$ is isomorphic to $\mathbb{Z}_2$ and contains a unique strong inversion, the Montesinos trick~\cite{Mo} implies that for all sufficiently long slopes $r$, the manifold $K_n(r)$ admits a unique description as a double branched cover of $S^3$, namely the one with branching set $T_n(r)$ constructed in the previous sections.

In the previous sections, we have shown that for every integral slope $r\in\mathbb Z$ the branching set $T_n(r)$ is thick. Lemma~\ref{lem:rational_thick} below is a slight adaptation of Proposition~5.2 in \cite{W} and proves that $T_n(r)$ is thick for any rational slope. Consequently, a slope $r$ can be a thin slope for $K$ only if it is either an exceptional slope (that is, $K_n(r)$ is not hyperbolic) or a symmetry-exceptional slope (that is, $K_n(r)$ is hyperbolic but has a symmetry group strictly larger than that of $K_n$). We rule out both cases in the next two subsections.

\begin{lemma}\label{lem:rational_thick}
Let $K$ be a strongly invertible knot with tangle exterior $T$ such that for any slope $r$ the $r$-surgery $K(r)$ is diffeomorphic to the double branched cover of the tangle filling $T(r)$. If for an integer $n\in\mathbb Z$ the tangle fillings $T(n)$ and $T(n+1)$ are thick, then the tangle fillings $T(r)$ are thick for all $r\in\mathbb Q\cap[n,n+1]$.
\end{lemma}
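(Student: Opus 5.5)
Write $r=p/q\in(n,n+1)$ in lowest terms with $q\ge 2$; the endpoints $r=n,n+1$ are the hypotheses. I would argue by induction on the length of the Farey path from $\{n,n+1\}$ to $r$, i.e.\ on the depth of $r$ in the Stern--Brocot subtree below the interval $[n,n+1]$. Every such $r$ is the mediant $r=(a+c)/(b+d)$ of two Farey neighbours $a/b<c/d$ in $[n,n+1]$, with $|ad-bc|=1$, both of strictly smaller depth. By induction the claim reduces to one step: \emph{if $T(s)$ and $T(t)$ are thick for Farey neighbours $s,t$, then $T(s\oplus t)$ is thick}, where $s\oplus t$ is the mediant.

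For this step I would use the unoriented skein exact triangle in reduced Khovanov homology. Choose a crossing in a rational tangle representing the filling so that its two resolutions give exactly the tangles of slopes $s$ and $t$. This is possible because the three slopes $s,t,s\oplus t$ pairwise have distance one, and up to a homeomorphism of the tangle ball they are $0,\infty,1$; the $1$-tangle is a single crossing whose resolutions are $0$ and $\infty$. Ignoring gradings, the exact triangle then links $\widetilde{\mathrm{Kh}}(T(s\oplus t))$, $\widetilde{\mathrm{Kh}}(T(s))$ and $\widetilde{\mathrm{Kh}}(T(t))$. In this triangle the $\delta$-gradings shift by constants depending only on the diagrams, and in particular on a correction involving negative crossings and linking data. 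This is the same bookkeeping as Watson's setup and \cite[Prop.~5.2]{W}.

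I would prove the step by contradiction. Suppose $T(s\oplus t)$ is thin. Watson's argument (and Greene's thin-link results) would then force rank relations among the determinants and Khovanov ranks. Since the double branched covers are the surgeries $K(\cdot)$, determinants are $|H_1|$, so $\det T(p/q)=|p\,a_K - q\,b_K|$-type linear expressions in the slope, up to normalization. Hence $\det T(s\oplus t)=\det T(s)+\det T(t)$ whenever all three slopes lie on the same side of the slope where $\det$ vanishes. The relevant side condition is that $[n,n+1]$ contains no slope with $H_1$ infinite; I would make this an assumption, since the hypothesis that $T(n)$ and $T(n+1)$ are thick implies they are not split-like degenerate. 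Thin links satisfy $\operatorname{rank}\widetilde{\mathrm{Kh}}=\det$. The triangle gives
\[
\operatorname{rank}\widetilde{\mathrm{Kh}}(T(s))\le \operatorname{rank}\widetilde{\mathrm{Kh}}(T(s\oplus t))+\operatorname{rank}\widetilde{\mathrm{Kh}}(T(t)),
\]
together with the other permutations. Combined with additivity of determinants, this would force the connecting map to vanish and the gradings to align, so that $T(s)$ and $T(t)$ are also thin. That contradicts the thickness of $T(n)$ or $T(n+1)$ once the induction is propagated back to the endpoints.

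A more robust alternative, and the one I would actually try first, is to propagate thickness forward directly. The two $\delta$-gradings witnessing thickness in $T(s)$ and $T(t)$ should survive in $T(s\oplus t)$ because of the grading shifts, exactly as in the integral computations of Lemma~\ref{lem:two-lines}. The main obstacle is therefore the grading bookkeeping: verifying that the $\delta$-shifts in the triangle for the mediant are uniform, so that a thick generator in one end cannot be cancelled. This is where I would follow \cite[Prop.~5.2]{W} most closely, adapting only the normalization of the framing of $T$.
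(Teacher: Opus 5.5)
Your overall frame (induction over Farey mediants in $[n,n+1]$, a crossing of the rational tangle whose two resolutions are the parents, and the unoriented skein triangle with additive determinants) agrees with the paper's sketch, which is Watson's continued-fraction induction. The inductive step you actually propose, however, does not work.

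\textbf{The contradiction argument.} If $T(s\oplus t)$ is thin and determinants add, exactness only gives
\[
2\,\mathrm{rk}\,\partial=\bigl(\mathrm{rk}\,\mathrm{Kh}(T(s))-\det T(s)\bigr)+\bigl(\mathrm{rk}\,\mathrm{Kh}(T(t))-\det T(t)\bigr).
\]
So $\partial=0$ is equivalent to both parents having rank equal to determinant. Nothing forces this. Thick parents with $\delta$-levels in both classes mod $4$ have rank $>\det$, and $\partial$ simply cancels the excess.

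Concretely, normalize so that the two non-connecting maps preserve $\delta$ and $\partial$ lowers $\delta$ by $2$; the shifts in \eqref{eq:long-positive} are $+c$, $-1$, $-c-1$. Suppose the parent receiving $\partial$ is supported in $\{\delta_0,\delta_0+2\}$ and the other in $\{\delta_0,\delta_0+4\}$, with $\partial$ mapping the $(\delta_0+4)$-part isomorphically onto the $(\delta_0+2)$-part. Then the mediant is supported only in $\delta_0$. Determinant additivity also holds, as soon as the first parent's $\delta_0$-rank is at least its $(\delta_0+2)$-rank. Hence ``$T(s),T(t)$ thick $\Rightarrow T(s\oplus t)$ thick'' cannot be extracted from the triangle. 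Your step ``the connecting map vanishes, so the parents are thin'' has no basis. Even rank $=\det$ alone would not give thinness, since $\delta$-levels differing by $4$ contribute to $\det$ with the same sign.

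\textbf{What is missing.} The gap is precisely the step you postpone as ``grading bookkeeping''. Because $\partial$ has $\delta$-degree exactly one step, two levels are protected:
\begin{itemize}
\item the top $\delta$-level of the parent receiving $\partial$ survives in $T(s\oplus t)$ if the other parent does not reach higher;
\item the bottom level of the parent emitting $\partial$ survives if the receiving parent does not reach lower.
\end{itemize}
To use this, you must strengthen the induction hypothesis from ``thick'' to a statement locating two witnessing $\delta$-gradings. This has to be in a normalization that is coherent across all Farey triangles in $[n,n+1]$, of the kind of uniform information Lemma~\ref{lem:two-lines} provides for the integral slopes. You then have to check at every mediant that these gradings sit where $\partial$ cannot cancel them. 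Without this, the proposal does not prove the lemma.

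\textbf{Minor points.} No assumption on slopes with infinite $H_1$ is needed. For a knot in $S^3$ one has $\det T(p/q)=|p|$, and the slope $0$ can only be an endpoint of $[n,n+1]$, so additivity is automatic. Also, your justification is backwards: thickness does not rule out $\det=0$. A determinant-zero link is automatically thick, since its nonzero reduced Khovanov homology must occupy $\delta$-levels of both signs mod $4$.
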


\begin{proof}
The proof follows the argument of Watson
\cite[Proposition~5.2]{W}. The induction on the continued fraction
expansion of $r$ uses only the skein long exact sequence and the existence of
non-trivial Khovanov homology in at least two $\delta$-gradings for the integer
closures $T(n)$ and $T(n+1)$. Consequently, the same argument
shows that every rational closure $T(r)$ has Khovanov homology supported in at
least two $\delta$-gradings, and hence is Khovanov thick.
\end{proof}

\subsection{Exceptional slopes}

We first classify the exceptional slopes of the $K_n$.

\begin{lemma}\label{lem:exc}
    For each $n\geq 1$, the only exceptional slope of $K_n$ $($other than $\infty$$)$ is $8n+6$, yielding the Seifert fibered space $M \left(-1; \dfrac{1}{2} , \dfrac{2n+1}{4n+4} , \dfrac{2}{4n+5} \right)$.
\end{lemma}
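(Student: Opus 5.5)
The plan has two parts: an explicit identification of $K_n(8n+6)$, and a uniform argument that every other nontrivial slope is hyperbolic. Both will be run through the surgery description of Figure~\ref{fig:knot}, in which $K_n$ is obtained from the hyperbolic link $L=L12n1739$ by Dehn filling two components. Slopes on $K_n$ then correspond to slopes on the remaining component $K$ via $r\mapsto r-(8n+4)$. So $K_n(r)$ is a Dehn filling of the three-cusped manifold $S^3\setminus L$ along a family of slopes, two of which depend linearly on $n$.

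\emph{Identifying the Seifert fibered filling.} For $r=8n+6$, I would use the quotient tangle picture. By Figure~\ref{fig:tangle}(b), $T_n(8n+6)$ is the two-component link $D^{-(2n+3)}_{n+1}$. I would isotope this diagram into a Montesinos link with three rational tangles. The twist boxes of lengths $2n+3$ and $n+1$ should produce the denominators $4n+4$ and $4n+5$, and a Hopf-type clasp should give the tangle $1/2$. The double branched cover is then the Seifert fibered space $M(-1;\tfrac12,\tfrac{2n+1}{4n+4},\tfrac{2}{4n+5})$. As a cross-check, I would verify the order of $H_1$ against $|8n+6|$ and confirm the identification in SnapPy/Regina for small $n$.

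\emph{Ruling out all other exceptional slopes.} The key step is to show that every other filling is hyperbolic. I would treat $S^3\setminus L$ as a three-cusped hyperbolic manifold and apply the effective Dehn filling results of Futer--Purcell--Schleimer~\cite{FPS}, in the form used in~\cite{BKM2}.
\begin{itemize}
\item Compute verified cusp shapes of $S^3\setminus L$ (for instance with SnapPy's verified computations) and maximal disjoint horoball cusp neighbourhoods.
\item Use these to bound the normalized lengths of the two $n$-dependent filling slopes.
\item Find $n_0$ (of the order of the $n\ge 9$ obtained in the symmetry lemma) such that, for $n\ge n_0$, the two fixed fillings are long enough to keep the two-cusp-filled manifold $S^3\setminus K_n$ hyperbolic, with a controlled geometric cusp.
\item For the last cusp, invoke the $6$-theorem / Agol--Lackenby bound: any slope of length greater than $6$ on a maximal cusp of $S^3\setminus K_n$ yields a hyperbolic filling.
\end{itemize}
Since the cusp geometry of $S^3\setminus K_n$ converges to that of the corresponding cusp of $S^3\setminus L$ as $n\to\infty$, only finitely many slopes $r$ have length at most $6$. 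These candidates can be listed uniformly as a fixed finite set of slopes on $K$, namely $r-(8n+4)\in$ a small set independent of $n$.

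\emph{Finishing the argument.} For each slope in this finite candidate set, with $n\ge n_0$, I would show directly that the filled manifold is hyperbolic, except for $r-(8n+4)=2$. One option is to fill the $K$ cusp of $L$ first and check that the resulting two-cusped manifolds are hyperbolic with verified certificates. Then a second application of~\cite{FPS} to the remaining $n$-dependent cusps handles all large $n$ simultaneously. For $1\le n<n_0$, I would build $K_n$ in SnapPy and certify hyperbolicity of all candidate short slopes; this list could also be compared with the exceptional-slope census of~\cite{D1,D2}. The main obstacle is making the effective filling bounds uniform in $n$ for the three-cusped $L$: it requires rigorous cusp-area estimates after partial filling. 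If the resulting $n_0$ is too large, the finite SnapPy verification grows, but it remains finite, so the argument closes.
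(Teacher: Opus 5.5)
Your plan is correct, but for the hyperbolicity part it takes a more roundabout route than the paper.

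\textbf{Ruling out other exceptional slopes.} You first control the maximal cusp of the one-cusped manifold $S^3\setminus K_n$ via Futer--Purcell--Schleimer, and only then apply the $6$-theorem. The paper instead applies the multi-cusp $6$-theorem directly to the three-cusped $S^3\setminus L$, with cusp neighbourhoods fixed once and for all in $L$. The slopes $1/(2n+3)$ and $-1/(n+1)$ are not short for any $n\ge 2$. Hence an exceptional filling $(1/(2n+3),\,r-(8n+4),\,-1/(n+1))$ must have middle slope in the fixed short set $\{\infty,-1,0,1,2,3,4,5\}$, and this set is automatically independent of $n$.

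From there the argument matches yours. The slope $\infty$ is trivial and $2$ corresponds to $r=8n+6$. For $s\in\{-1,0,1,3,4,5\}$ one checks with SnapPy that $L(*,s,*)$ is hyperbolic and that the remaining $n$-dependent slopes are long there.

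What the paper's route buys:
\begin{itemize}
\item It bypasses exactly the step you flag as the main obstacle, namely uniform effective control of the cusp of $K_n$ under partial filling.
\item It works from $n\ge 2$ rather than from a possibly large $n_0$.
\item It leaves only $n=1$, which the paper takes from \cite{BKM}.
\end{itemize}
Your route would also go through, since the FPS bilipschitz estimates do provide that control, but at the cost of heavier estimates and a larger finite computation.

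\textbf{The Seifert fibered filling.} The paper does not identify $T_n(8n+6)$ as a Montesinos link; it simply cites \cite[Lemma~2.2]{BK}. Your Montesinos identification would make this step self-contained. Your homology cross-check is consistent: $|H_1|$ of $M(-1;\tfrac12,\tfrac{2n+1}{4n+4},\tfrac{2}{4n+5})$ is $2(4n+3)=8n+6$. As written, however, the identification is only a sketch (``should produce''), and the isotopy still has to be carried out.
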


\begin{proof}
In~\cite{BKM} it was already shown that $14$ is the only exceptional slope of $K_1$, yielding the claimed Seifert fibered space. Thus, we can assume that $n>1$.
We consider again the surgery description 
\[
        K_n=L\left(\frac{1}{2n+3},*,-\frac{1}{n+1}\right)
\]
from Figure~\ref{fig:knot}, where the $*$ denotes that the second cusp of $L$ is left unfilled. Any exceptional filling $r$ of $K_n$ corresponds to the exceptional filling 
$$\left(\frac{1}{2n+3},r-(8n+4),-\frac{1}{n+1}\right)$$ 
of $L$. The 6-theorem of Agol~\cite{Agol} and Lackenby~\cite{Lackenby} says that if all slopes of $L$ are longer than $6$, the filled manifold is again hyperbolic and thus the filling is not exceptional. 

    Using SnapPy~\cite{SnapPy}, we compute the short slopes on the first and third cusps. The slopes $1/(2n+3)$ and $-1/(n+1)$ are not short for every $n\geq 2$. Thus, any exceptional filling of $K_n$ must arise from a short slope on the second cusp of $L$. The set of short slopes on this cusp is $\{\infty,-1,0,1,2,3,4,5\}$.
    The slope $\infty$ corresponds to the meridional filling and yields $S^3$. 
    And it was already shown in Lemma~2.2 of \cite{BK} that the slope $(2,1)$ of $L$, which gets mapped to the slope $8n+6$ of $K_n$ is exceptional, yielding the Seifert fibered space $M \left(-1; \dfrac{1}{2} , \dfrac{2n+1}{4n+4} , \dfrac{2}{4n+5} \right)$. 

    For each of the other six short fillings $s\in \{-1,0,1,3,4,5\}$ we use SnapPy to verify that $L(*,s,*)$
     is hyperbolic. Since all other fillings of the remaining cusps yielding $K_n$ are sufficiently long, this implies that all other fillings of $K_n$ are hyperbolic.
\end{proof}

\subsection{Symmetry-exceptional slopes}

We continue by classifying the symmetry-exceptional slopes. Note that the symmetry exceptional slopes of $K_1$ were already classified as $\{12,15,16\}$ in~\cite{BKM}.

\begin{lemma}\label{lem:sym_exc}
    For each $n\geq 2$, the only symmetry-exceptional slopes of $K_n$ are the slopes $8n+4$ and $8n+8$, which both yield manifolds with symmetry group $(\mathbb Z_2)^2$.
\end{lemma}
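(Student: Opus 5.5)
The plan mirrors the strategy of the previous lemma (Lemma~\ref{lem:exc}): reduce everything to finitely many fillings of the link $L$ from Figure~\ref{fig:knot}, and then inspect those with SnapPy.

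We again write
\[
K_n=L\left(\tfrac{1}{2n+3},*,-\tfrac{1}{n+1}\right),
\]
so that a slope $r$ on $K_n$ corresponds to the slope $r-(8n+4)$ on the second cusp of $L$. We use the effective version of Thurston's hyperbolic Dehn filling theorem due to Futer--Purcell--Schleimer~\cite{FPS}, in the form used in~\cite{BKM2}. When the normalized lengths of all filling slopes exceed an explicit bound, the filled manifold is hyperbolic. Moreover, the cores of the filling solid tori are then the shortest geodesics, so every symmetry of the filling preserves the union of the cores. Consequently the symmetry group of the filling injects into the symmetry group of $L$, which has order two by the proof of the first lemma.

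We compute this bound for the first and third cusps of $L$. From the proof of the first lemma, the slopes $1/(2n+3)$ and $-1/(n+1)$ are already long enough once $n\ge 9$. Note, however, that we need the criterion with one cusp kept unfilled and then filled by an arbitrary slope. So we actually apply it to the pair consisting of the $K_n$-core cusp together with the second-cusp filling. This yields a finite set $S$ of \emph{short} slopes on the second cusp, computed via SnapPy cusp shapes at the maximal cusp. Every slope on the second cusp outside $S$, with the other two cusps filled as required, produces a manifold whose symmetry group injects into that of $K_n$, namely $\mathbb Z_2$. Hence any symmetry-exceptional slope of $K_n$ lies in $S+(8n+4)$, after excluding $\infty$ and the exceptional slope $8n+6$ from Lemma~\ref{lem:exc}.

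For the remaining finitely many slopes $s\in S$, we proceed in two stages. First, we consider the two-cusped manifold $L(*,s,*)$ and compute its symmetry group with SnapPy. For large $n$ we then apply the FPS bound again to $L(*,s,*)$, whose remaining cusps are filled with the long slopes $1/(2n+3)$ and $-1/(n+1)$. This shows that the symmetry group of $K_n(s+8n+4)$ injects into that of $L(*,s,*)$.

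When $s=0$ or $s=4$, we expect $L(*,s,*)$ to carry an extra symmetry exchanging or preserving the two remaining cusps in a way compatible with the fillings. This should give $(\mathbb Z_2)^2$, realized by $K_n(8n+4)$ and $K_n(8n+8)$. We would exhibit the second involution explicitly: for instance, $8n+4$ corresponds to filling $L$'s second cusp with $0$, which plausibly produces a link with an additional visible rotation. We would also confirm it numerically using SnapPy's isometry signature for several $n$. For the other $s\in S$, the symmetry group of $L(*,s,*)$ restricted to filling-compatible maps is just $\mathbb Z_2$, so these slopes are not symmetry-exceptional.

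Finally, the finitely many small values $2\le n\le 8$ (or whatever the computed threshold turns out to be) are handled by building $K_n$ directly. For each such $n$ we check the symmetry groups of $K_n(r)$ for all $r$ in the short-slope range, with verified computations as in~\cite{data}.

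The main obstacle is the uniformity in $n$. The FPS bound has to hold simultaneously for the two long fillings and the candidate slope, and showing that the extra involution at $s=0,4$ persists for all $n$ (rather than only numerically) may require a hand argument. That argument would exhibit a symmetric surgery diagram of $L(1/(2n+3),s,-1/(n+1))$ with two commuting involutions.
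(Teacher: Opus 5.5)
Your plan is essentially the paper's proof. The Futer--Purcell--Schleimer bound makes the fillings $1/(2n+3)$ and $-1/(n+1)$ long for $n\ge 9$, which leaves a finite set of short slopes $s$ on the middle cusp. There SnapPy shows $L(*,s,*)$ has symmetry group $\mathbb Z_2$ except for $s\in\{\infty,0,2,4\}$: $\infty$ and the exceptional slope $2$ are discarded, and $s=0,4$ give $(\mathbb Z_2)^2$, i.e.\ the slopes $8n+4$ and $8n+8$. The cases $2\le n\le 8$ are checked directly. Your second-stage use of the bound for $L(*,s,*)$ itself is slightly more careful than the paper's reuse of $B(L)$. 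The explicit extra involutions you want are the ones exhibited by the symmetric surgery diagrams for $K_n(8n+4)$ and $K_n(8n+8)$ later in the paper.
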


\begin{proof}
    We use the same strategy as in the proof of Lemma~\ref{lem:exc}. But instead of the 6-theorem, we use the following. Let $M$ be a cusped hyperbolic manifold. Then, a recent result of Futer--Purcell--Schleimer~\cite{FPS} implies that there exists a computable bound $B(M)$ such that every filling of $M$ with slopes of normalized lengths longer than $B(M)$ has the same symmetry group as $M$, cf.~\cite{BKM2}.

    First, we use SnapPy to compute the bound $B(L)$, where $L$ is the link of the surgery description of the $K_n$. Then we check that the slopes $1/(2n+3)$ and $-1/(n+1)$ on the first and third cusps of $L$ have normalized length longer than $B(L)$ for all $n\geq 9$. Hence, any symmetry-exceptional slope of $K_n$ for $n\geq9$ must arise from a slope of normalized length less than $B(L)$ on the middle cusp of $L$. 
    
    The slopes on this cusp, whose normalized lengths are less than the Futer--Purcell--Schleimer bound $B(L)$, form a finite set. For all slopes $s$ in this set except
    \[
        \{\infty,0,2,4\},
    \]
    we compute that $L(*,s,*)$ has symmetry group isomorphic to $\mathbb{Z}_2$. Therefore, the only slopes that can give symmetry-exceptional fillings of $K_n$ are the images of $\{\infty,0,2,4\}$.

 The slope $\infty$ gives the meridional filling of $K_n$ and hence the manifold $S^3$, while the slope $2$ is exceptional by the previous lemma and therefore does not contribute a symmetry-exceptional hyperbolic filling.
It remains to consider the slopes $4$ and $0$. With SnapPy, we compute that the manifolds
    \[
         L(*,4,*)
        \quad\text{and}\quad
        L(*,0,*)
    \]
    both have symmetry group isomorphic to $(\mathbb{Z}_2)^2$. Since the normalized lengths of the slopes on the first and last cusp of $L$ yielding $K_n$ for $n\geq9$ are larger than $B(L)$, it follows that the images of $4$ and $0$ are symmetry-exceptional slopes for $K_n$ yielding manifolds with symmetry group $(\mathbb{Z}_2)^2$.

    Finally, for $n<9$, we build the knots $K_n$, compute the Futer--Purcell--Schleimer bound $B(K_n)$, and compute the symmetry groups of all fillings with length less than $B(K_n)$. This yields that also for $2\le n< 9$ the only symmetry exceptional slopes are $8n+4$ and $8n+8$. 
\end{proof}

\subsection{Thin slopes}

\begin{proof}[Proof of Theorem~\ref{thm:no_thin}]
    It was already shown in \cite{BKM} that $K_1$ has no thin slope. So in the following, we can assume that $n>1$. As explained at the beginning of this section, every thin slope of $K_n$ has to be exceptional or symmetry-exceptional. 
    
    By Lemma~\ref{lem:exc}, the only exceptional slope of $K_n$ is $8n+6$, yielding the small Seifert fibered space $M \left(-1; \dfrac{1}{2} , \dfrac{2n+1}{4n+4} , \dfrac{2}{4n+5} \right)$. In Lemma 2.2 of \cite{BK}, it is shown that this Seifert fibered space is an L-space. Thus, Lemma 4.1 from \cite{BKM} implies that we can write this Seifert fibered space uniquely as a double branched cover. This implies that the only possible branching set has to be the tangle filling $T_n(8n+6)$, which was already shown above to be thick.

    By Lemma~\ref{lem:sym_exc}, the only symmetry-exceptional slopes of $K_n$ are $8n+6\pm2$, which both yield manifolds with symmetry group $(\mathbb Z_2)^2$. Thus Lemma 2.4 of \cite{BKM2} shows that we can write these manifolds in exactly three ways as double branched cover over links in $3$-manifolds (possibly different from $S^3$). One of these is the tangle filling $T_n(8n+6\pm2)$, which is already known to be thick.

However, the others have quotient manifolds different from $S^3$ as shown in Lemmas \ref{lem:8n+4} and \ref{lem:8n+8} below.
Thus, we confirmed that $K_n(8n+6\pm 2)$ has the unique description as the double branched cover of $S^3$.  
\end{proof}

\begin{lemma}\label{lem:8n+4}
$K_n(8n+4)$ has the unique description as the double branched cover of $S^3$ over a link.
\end{lemma}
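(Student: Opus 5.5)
By Lemma~\ref{lem:sym_exc}, the manifold $M=K_n(8n+4)$ is hyperbolic with symmetry group $(\mathbb Z_2)^2$. The plan is to show that only one of its involutions is the covering involution of a double branched cover of $S^3$. We use the standard correspondence: a description of a hyperbolic $M$ as a double branched cover over a link in a $3$-manifold $Q$ is the same thing as an orientation-preserving involution $\tau$ of $M$ with non-empty fixed set, where $Q=M/\tau$.

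Since the symmetry group is $(\mathbb Z_2)^2$, there are exactly three non-trivial involutions $\tau_1,\tau_2,\tau_3$, and Lemma~2.4 of \cite{BKM2} turns these into at most three double branched cover descriptions. One of them, say $\tau_1$, is the strong inversion $\Phi$ of $K_n$ extended over the filling solid torus. Its quotient is $S^3$ with branching set $T_n(8n+4)$. So it remains to show that $M/\tau_2$ and $M/\tau_3$ are not $S^3$. For this it suffices to show that each has non-trivial first homology, or is not simply connected, or that the involution acts freely or with a fixed set incompatible with $S^3$.

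To locate $\tau_2$ and $\tau_3$, use the surgery description $K_n(8n+4)=L(\tfrac1{2n+3},0,-\tfrac1{n+1})$, since slope $8n+4$ of $K_n$ corresponds to slope $0$ on the middle cusp of $L$. SnapPy already showed that $L(*,0,*)$ has symmetry group $(\mathbb Z_2)^2$. I would draw the two-cusped manifold $L(*,0,*)$ as a link in $S^3$ that exhibits all three involutions: the original rotation $\Phi$ together with a second visible rotation or reflection-type symmetry that preserves the $0$-framed component. Each involution should preserve both remaining cusps and the slopes $\tfrac1{2n+3}$ and $-\tfrac1{n+1}$, so it extends over the filling for every $n$. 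I would then take the quotient by $\tau_2$ (and by $\tau_3=\tau_1\tau_2$) uniformly in $n$: the quotient of the cusped manifold is a tangle-type object, and the fillings become rational tangle fillings or surgeries in the quotient.

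The next step is to compute $H_1$ of the quotient, for example via the fixed-point data and the Smith/transfer relation $H_1(M/\tau;\mathbb Q)$ versus $H_1(M)^{\tau}$, or directly from the quotient diagram. We need $M/\tau_i\neq S^3$. The most plausible mechanism is that $\tau_2$ or $\tau_3$ either acts freely (so the quotient has $|H_1(M)|/\ldots$ structure, or $H_1$ of order $2|\cdots|$), or has quotient a lens space or a manifold with non-trivial $H_1$ depending on $n$, which can be read off a surgery description of the quotient.

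The main obstacle is making the identification of $\tau_2,\tau_3$ and their quotients explicit and uniform in $n$, rather than only for $n<9$ by computer. My fallback is a hybrid argument. For small $n$, use SnapPy to compute the quotient orbifolds of $K_n(8n+4)$ and check that their underlying spaces are not $S^3$. For $n\ge 9$, use that the symmetries of $K_n(8n+4)$ come from those of $L(*,0,*)$ by Futer--Purcell--Schleimer, as in Lemma~\ref{lem:sym_exc}. Then compute the quotients of the cusped orbifold $L(*,0,*)/\tau_i$ once, and observe that filling the two cusps with the images of slopes $\tfrac1{2n+3}$ and $-\tfrac1{n+1}$ gives underlying spaces whose first homology is non-trivial, for instance containing a $\mathbb Z_2$ or $\mathbb Z_{n+1}$ summand, for all $n$.
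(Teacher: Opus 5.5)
Your framework matches the paper's: the symmetry group $(\mathbb Z_2)^2$ gives three involutions (Lemma~2.4 of \cite{BKM2}), one of them is $\Phi$ with quotient $S^3$ and branch set $T_n(8n+4)$, and uniqueness reduces to showing that the other two quotients are not $S^3$. However, that reduction is already made in the proof of Theorem~\ref{thm:no_thin}. The whole content of this lemma is the remaining step, and your proposal does not carry it out. You never exhibit $\tau_2,\tau_3$, and you never identify $M/\tau_2$ or $M/\tau_3$. You only list possible mechanisms (``most plausible mechanism'', ``for instance containing a $\mathbb Z_2$ or $\mathbb Z_{n+1}$ summand''). As written, nothing rules out $M/\tau_2\cong S^3$, so there is a genuine gap.

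The missing ingredient is a concrete symmetric picture. The extra involutions are not visible on the diagram of $L$ with its middle component $0$-framed, since $\Phi$ is the only non-trivial symmetry of $L$; a new surgery diagram is needed. The paper draws a surgery diagram of the closed manifold with three visible rotation axes: $A_h$, $A_v$, and the axis perpendicular to the page. It certifies that this diagram represents $K_n(8n+4)$ without Kirby calculus, simply by checking that its quotient about $A_h$ is exactly the link $T_n(8n+4)$. For the other two axes, the surgery curves meeting the axis only contribute to the branch set, so the underlying quotient manifold is surgery on the images of the remaining curves. This gives $-L(2,1)=\mathbb R P^3$ for $A_v$ and $-L(2n+1,n+1)$ for the perpendicular axis, with $H_1\cong\mathbb Z_2$ and $\mathbb Z_{2n+1}$ respectively, uniformly in $n$.

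Your hybrid fallback is viable in principle: quotient the cusped orbifold $L(*,0,*)$ once and then fill, using that only cusps on which $\tau_i$ acts freely change the underlying space. But you give no method for constructing those quotient orbifolds or reading off their underlying spaces. The homology is guessed rather than computed, and the $n$-dependent factor is in fact $\mathbb Z_{2n+1}$, not $\mathbb Z_{n+1}$.
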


\begin{proof}
For the surgery diagram as shown in Figure \ref{fig:8n+4},
it is straightforward to verify that
the quotient of this diagram under the involution around the horizontal axis $A_h$ shown there yields $T_n(8n+4)$ as its branching set.
Hence, this surgery diagram gives the manifold $K_n(8n+4)$.

\begin{figure}[htbp]
\includegraphics*[width=0.6\textwidth]{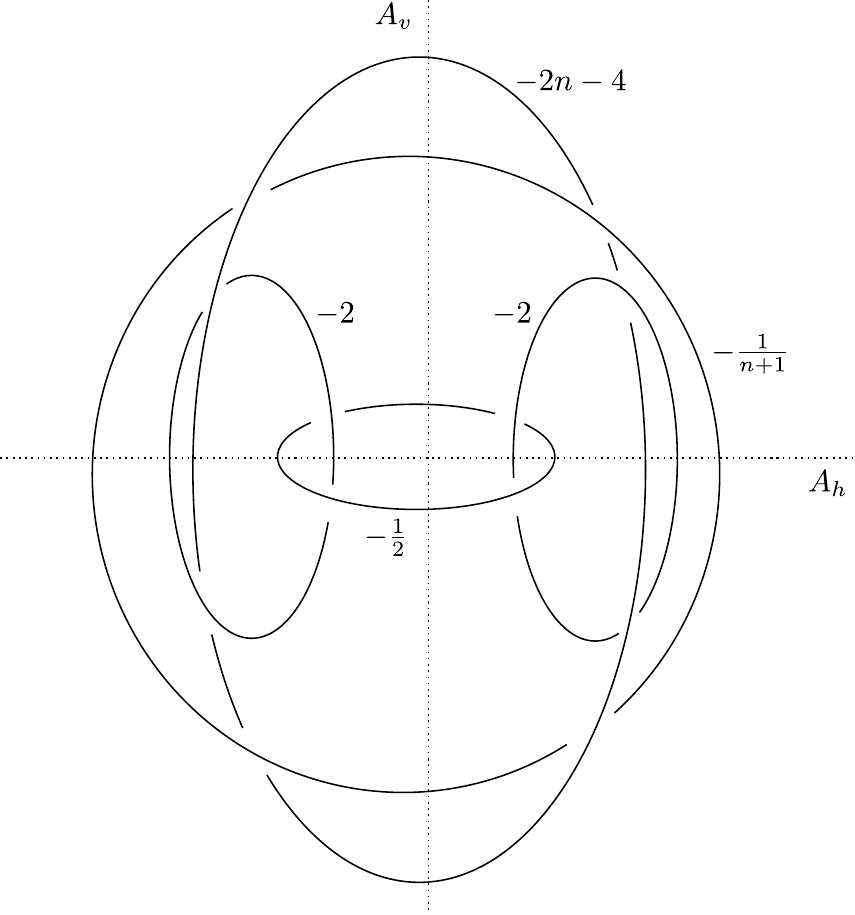}
\caption{The surgery diagram yields $K_n(8n+4)$.
}\label{fig:8n+4}
\end{figure}

On the other hand, this diagram has other symmetries.
If we take the quotient under the involution around the vertical axis $A_v$, then
the ambient manifold is seen to be the lens space $-L(2,1)$.
See Figure \ref{fig:8n+4another}(a), where we ignore the branching set, given by the surgery curves that intersect the symmetry axis.

Another symmetry is the involution around the axis perpendicular to the projection plane.
Then Figure \ref{fig:8n+4another}(b) shows that the ambient manifold after taking the quotient is the lens space $-L(2n+1,n+1)$.
\end{proof}

\begin{figure}[htbp]
\includegraphics*[width=0.8\textwidth]{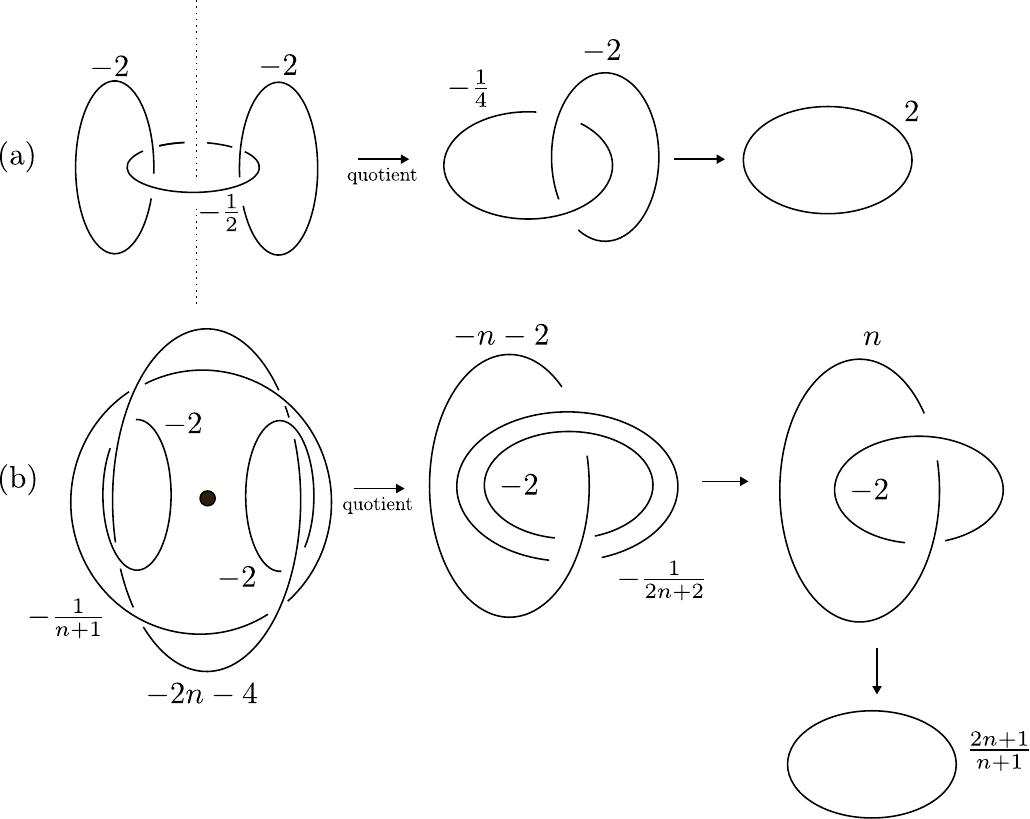}
\caption{(a) shows that the quotient manifold for the involution
around the vertical axis is $-L(2,1)=\mathbb R P^3$.
(b) shows that the quotient manifold for the involution around
the perpendicular axis is the lens space $-L(2n+1,n+1)$.
}\label{fig:8n+4another}
\end{figure}

\begin{lemma}\label{lem:8n+8}
$K_n(8n+8)$ has the unique description as the double branched cover of $S^3$ over a link.
\end{lemma}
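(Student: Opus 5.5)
I would follow the template of the proof of Lemma~\ref{lem:8n+4}. The first step is to draw a surgery diagram for $K_n(8n+8)$ that is visibly invariant under a Klein four-group of involutions. I would start from the strongly invertible description of Figure~\ref{fig:knot}: fill the middle component $K$ with the slope $(8n+8)-(8n+4)=4$, which by Lemma~\ref{lem:sym_exc} corresponds to the filling $L(*,4,*)$ whose symmetry group is $(\mathbb Z_2)^2$. I would then isotope the three-component surgery link $L(1/(2n+3),4,-1/(n+1))$, possibly after Rolfsen twists or slam-dunks, until it is symmetric under three rotations. These are about a horizontal axis $A_h$, a vertical axis $A_v$, and the axis perpendicular to the projection plane. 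I would first look for the position in which the component with coefficient $4$ is an unknot straddling both in-plane axes. This is modelled on Figure~\ref{fig:8n+4}, where the corresponding component carries coefficient $0$.

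The second step is to check that the quotient under $A_h$ is the known branching set. This is the strong inversion $\Phi$, so its quotient should recover the tangle $T_n$ filled with the rational tangle $8n+8$. I would verify this by performing the Montesinos trick directly on the diagram, exactly as in Lemma~\ref{lem:8n+4}. This identifies one of the three double branched cover descriptions guaranteed by Lemma~2.4 of \cite{BKM2} with $T_n(8n+8)$.

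The third step is to compute the quotient manifolds for the other two involutions. For a rotation about an axis, a surgery component that is disjoint from the axis and swapped with another component descends to a single surgery curve with the same coefficient. A component that is invariant and meets the axis becomes part of the branching set, and we ignore it when computing the ambient quotient. A component invariant under rotation about an axis that it links (rather than meets) descends with its coefficient transformed, namely $p/q\mapsto p/(2q)$ or $2p/q$ according to the linking. Applying these rules, I expect the quotient under $A_v$ to be a surgery on a single unknot with nontrivial coefficient. That gives a lens space, plausibly $\mathbb{R}P^3$ again or some $L(4,\cdot)$ coming from the coefficient $4$. I expect the quotient under the perpendicular axis to be a lens space built from the twist parameters $2n+3$ and $n+1$, something like $-L(2n+3,\cdot)$ or $-L(n+1,\cdot)$. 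Provided neither quotient is $S^3$, for every $n\ge2$ only the $A_h$ description has quotient $S^3$, which proves uniqueness.

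The main obstacle is the first step: finding a diagram in which all three involutions are simultaneously visible, and then correctly tracking the framings through the quotients. One danger is that a naive quotient produces a lens space $L(p,q)$ with $p=\pm1$ for some $n$, for instance from a twist parameter like $n+1$ or $2n+1$ degenerating, and such a quotient would be $S^3$. I would guard against this by recomputing the quotient's first homology in two ways: from the quotient surgery diagram, and via $|H_1(K_n(8n+8))|=8n+8$ together with the standard relation between the order of the homology of a double cover and that of its quotient. As a sanity check, I would confirm the resulting lens spaces with SnapPy for small $n$, as was done in \cite{data}.
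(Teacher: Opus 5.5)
Your outline is the paper's argument. You build a surgery diagram for $K_n(8n+8)$ with three visible commuting involutions, identify the $A_h$-quotient with $T_n(8n+8)$, and get uniqueness from Lemma~2.4 of \cite{BKM2} once the other two quotients are shown not to be $S^3$. But that last step is the entire content of the lemma. You leave it conditional (``provided neither quotient is $S^3$''), and none of your predicted answers matches the paper's. In the paper, the axis perpendicular to the projection plane gives $L(2,1)$ and $A_v$ gives $L(4n+4,2n+1)$. This reverses the roles relative to $K_n(8n+4)$, which you modelled your guesses on. Your candidate $-L(2n+3,\cdot)$ can be excluded outright. For a double branched cover $\pi\colon M\to N$ the transfer satisfies $\pi_*\circ t=2$, so the odd-order part of $H_1(N)$ embeds in $H_1(M)$. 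Here $|H_1(K_n(8n+8))|=8n+8$ and $\gcd(2n+3,8n+8)=\gcd(2n+3,4)=1$. That divisibility of odd parts is also all your proposed homology cross-check can deliver; there is no relation recovering $|H_1(N)|$ from $|H_1(M)|$.

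You rightly flag framings as the main danger, and two of your quotient rules are inaccurate. Let $\pi\colon S^3\to S^3/\tau\cong S^3$ be the quotient by the rotation, and $\bar S$ a Seifert surface for an image curve $\bar C$, transverse to the branch axis. Then $\pi^{-1}(\bar S)$ is a surface bounded by $\pi^{-1}(\bar C)$, so $\mathrm{lk}(\pi_*\gamma,\bar C)=\mathrm{lk}(\gamma,\pi^{-1}(\bar C))$ for curves $\gamma$ off the axis. For a swapped pair $C_1$, $C_2=\tau(C_1)$, this shows the Seifert longitude of $C_1$ maps to $\bar\lambda+\mathrm{lk}(C_1,C_2)\,\bar\mu$. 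So the coefficient $p/q$ descends to $p/q+\mathrm{lk}(C_1,C_2)$, not to $p/q$. For instance, take the $(0,0)$-framed Hopf link, whose surgery is $S^3$, with components exchanged by $(z_1,z_2)\mapsto(z_2,z_1)$. Its quotient is surgery on an unknot with coefficient $\mathrm{lk}(C_1,C_2)=\pm1$, i.e.\ $S^3$, whereas your rule would give $S^1\times S^2$. Now take an invariant component disjoint from the axis; its linking number with the axis is then automatically odd. The same computation shows its Seifert longitude maps to $2\bar\lambda$, so the rule is uniformly $p/q\mapsto p/(2q)$ in Seifert framings. The ``$2p/q$'' alternative belongs to the lifting direction, not the quotient.
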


\begin{proof}
The argument is similar to that of Lemma \ref{lem:8n+4}.
Figure \ref{fig:Tn(8n+8)} shows the surgery diagram yielding $K_n(8n+8)$.
This is confirmed by the fact that taking the quotient under the involution around the axis $A_h$ shown there yields the branching set  $T_n(8n+8)$.

\begin{figure}[htbp]
\includegraphics*[width=0.6\textwidth]{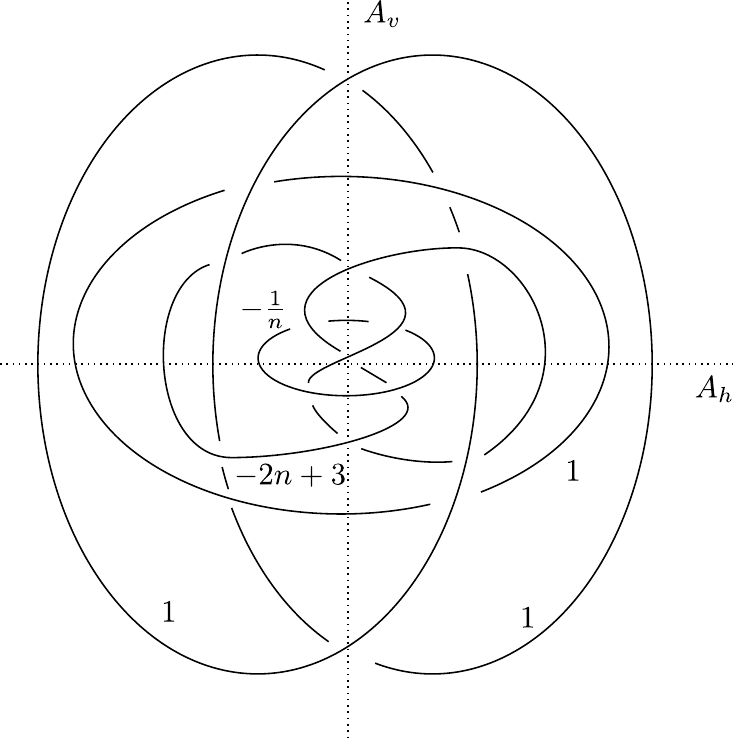}
\caption{This surgery diagram yields $K_n(8n+8)$.
}\label{fig:Tn(8n+8)}
\end{figure}

As before, this surgery diagram has other symmetries.
If we take the quotient around the axis perpendicular to the
projection plane, the ambient manifold is $L(2,1)$, as shown in Figure \ref{fig:Tn(8n+8)other}(a).
Finally, taking the quotient around the vertical axis $A_v$ yields $L(4n+4,2n+1)$ as shown in Figure \ref{fig:Tn(8n+8)other}(b).
\end{proof}

\begin{figure}[htbp]
\includegraphics*[width=0.8\textwidth]{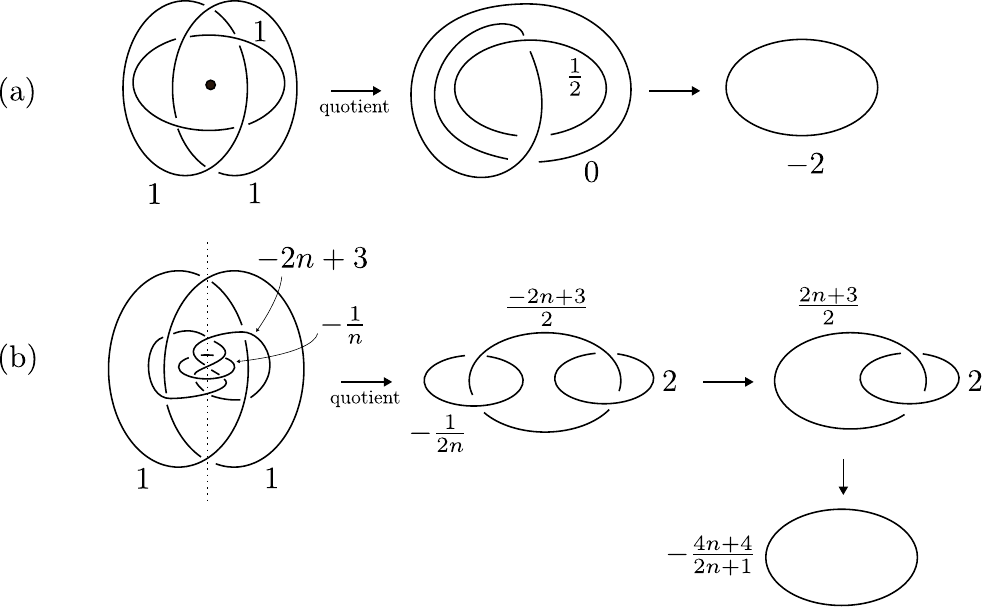}
\caption{(a) The quotient manifold under the involution around the perpendicular axis is $L(2,1)$.
(b) The quotient manifold under the involution around the vertical axis $A_v$ is $L(4n+4,2n+1)$.
}\label{fig:Tn(8n+8)other}
\end{figure}

\bibliographystyle{amsplain}

\end{document}